\begin{document}
	
	\title{Frank-Wolfe Algorithms for $(L_0, L_1)$-smooth functions}
	
	\author{
		A.A. Vyguzov\inst{1,3,4}\orcidID{0009-0005-1681-1750} \and
		F.S. Stonyakin\inst{1,2,3}\orcidID{0000-0002-9250-4438}
	}
	
	\authorrunning{A. Vyguzov et al.}
	
	\institute{
		Moscow Institute of Physics and Technology, Dolgoprudny, Institutsky lane, 9, Russia \and
		Simferopol, Academician Vernadsky Avenue, 4, V.~I.~Vernadsky Crimean Federal University, Republic of Crimea, Russia \and
		Innopolis University, Kazan, Tatarstan, 420500, Russia \and
		Adyghe State University, Maikop, Russia, Pervomayskaya Str., 208
	}
	
	\maketitle
	
	\begin{abstract}
		We propose a new version of the Frank--Wolfe method, called the \emph{$(L_0,L_1)$-Frank--Wolfe} algorithm, developed for optimization problems with $(L_0,L_1)$-smooth objectives. We establish that this algorithm achieves superior theoretical convergence rates compared to the classical Frank--Wolfe method. In addition, we introduce a novel adaptive procedure, termed the \emph{Adaptive $(L_0,L_1)$-Frank--Wolfe} algorithm, which dynamically adjusts the smoothness parameters to further improve performance and stability. Comprehensive numerical experiments\footnote{\href{https://github.com/DredderGun/l0_l1_smooth_amd_fw_article}{The code can be downloaded here}} confirm the theoretical results and demonstrate the clear practical advantages of both proposed algorithms over existing Frank--Wolfe variants.
		
		\keywords{Frank--Wolfe algorithm \and $(L_0,L_1)$-smoothness \and generalized smoothness \and convex optimization \and adaptive algorithms}
	\end{abstract}
	
	\section{Introduction}
	
	There are many methods for constrained optimization: the projected gradient method~\cite{polyak1983introduction}, barrier function methods, and penalty function methods~\cite{boyd2004convex,nesterov2018lectures}, among others. However, the Frank--Wolfe method, initially proposed in the seminal work~\cite{frank1956algorithm} and further generalized in~\cite{levitin1966constrained}, has recently gained significant popularity due to its inexpensive iteration cost compared to the projected gradient method (see~\cite{braun2022conditional}, Table~1.1 and~\cite{combettes2021complexity}). Moreover, the Frank--Wolfe method possesses the useful ability to generate sparse solutions, which is beneficial in many applications.
	
	In modern optimization, the Frank--Wolfe method is quite well studied (see the reviews~\cite{bomze2021frank,braun2022conditional}). In the seminal work~\cite{levitin1966constrained}, it was proved that the optimal convergence rate of this method is $O(1/k)$ for convex functions; moreover, a series of linear convergence results for the classical Frank--Wolfe method has been established~\cite{bomze2021frank,braun2022conditional}. However, these results mostly rely on the standard $L$-smoothness assumption, which is a rather restrictive class of functions. To relax this assumption, attempts have been made to introduce relatively smooth objectives~\cite{vyguzov2025adaptive,takahashi2025fast} for the Frank--Wolfe method, leading to a broader function class—yet many important modern machine learning problems remain uncovered.
	
	Recently,~\cite{zhang2019gradient} experimentally observed that modern language modeling tasks satisfy a property called $(L_0, L_1)$-smoothness, which generalizes standard $L$-smoothness. The pioneering work~\cite{zhang2019gradient} studied the clipped gradient method, and subsequently, many other gradient-based methods have been investigated for this class of functions. Although the $(L_0,L_1)$-smoothness framework has been extensively studied for gradient-based methods, very few works have addressed the Frank--Wolfe method. A related analysis of a hybrid between the clipped gradient method and the Frank--Wolfe algorithm was proposed in~\cite{pethick2025generalized}, but it did not establish linear convergence guarantees. In contrast, our work focuses on the classical (vanilla) Frank--Wolfe method for the convex case and demonstrates that exploiting the $(L_0,L_1)$-smooth structure of the objective can lead to significant acceleration benefits.
	
	There exist different step-size rules for the Frank--Wolfe method, but the most popular variants are the decreasing step size $2/(k+1)$ and the short step size (see the overviews in~\cite{braun2022conditional,bomze2021frank}). The decreasing step size achieves the optimal sublinear convergence rate and does not depend on any function parameters. Therefore, we focus on the short step size, where the $(L_0, L_1)$-smoothness parameters can be utilized to achieve improved convergence rates.
	
	Adaptive step-size strategies have recently gained attention in the Frank--Wolfe literature, with several works~\cite{aivazian2023adaptive,pedregosa2020linearly,vyguzov2025adaptive,khademi2025adaptive} proposing backtracking schemes that preserve convergence guarantees. However, these methods rely on a single $L$-smoothness parameter. We extend this framework by introducing a two-parameter adaptive procedure that simultaneously updates $L_0$ and $L_1$ based on their relative influence in the composite term $L_0 + L_1 \|\nabla f(x_k)\|$. This design enables more responsive yet stable parameter tuning.
	
	The main contributions of this work are as follows. We propose a new variant of the Frank--Wolfe algorithm tailored for $(L_0, L_1)$-smooth objectives and provide rigorous convergence guarantees. Specifically, we prove linear convergence rates, demonstrating superior performance compared to classical Frank--Wolfe. In the general convex setting, we establish that the algorithm attains the optimal $O(1/k)$ convergence rate, ensuring it is not worse than the standard method. Additionally, we introduce a novel adaptive procedure for independently updating the $(L_0, L_1)$ parameters, which yields improved empirical performance. Numerical experiments confirm that the proposed algorithm consistently outperforms both standard and existing adaptive Frank--Wolfe variants across diverse benchmarks.
	
	\section{Preliminaries}
	
	In this paper, we consider the minimization problem
	\begin{align}
		\min_{x \in Q} \; f(x),
	\end{align}
	where $Q$ is a convex compact set and $f$ is a convex and $(L_0,L_1)$-smooth function.
	
	\begin{definition}
		A function $f$ is said to be \emph{$(L_0,L_1)$-smooth} if, for all $x$, it satisfies
		\begin{equation}\label{def:l0_l1_twice_dif}
			\| \nabla^2 f(x) \| \leq L_0 + L_1 \| \nabla f(x) \|,
		\end{equation}
		for some constants $L_0, L_1 > 0$. Throughout this paper, unless specified otherwise, we use the standard Euclidean norm $\|\cdot\|$ for vectors and the spectral norm $\|\cdot\|$ for matrices.
	\end{definition}
	
	This notion was introduced in \cite{zhang2019gradient}, where it was used to explain the superior convergence behavior of the clipped gradient descent method on deep learning problems compared to standard gradient descent (GD).
	
	Later, in \cite{zhang2020improved} (see Remark 2.3), this definition was extended from twice differentiable to once differentiable functions. Specifically, a differentiable function $f$ is said to be $(L_0, L_1)$-smooth if, for all $x, y \in \mathbb{R}^d$ such that $\| x - y \| \leq \frac{1}{L_1}$, we have
	\begin{equation}\label{def:l0_l1}
		\|\nabla f(x) - \nabla f(y)\| \le (L_0 + L_1 \|\nabla f(y)\|)\|x - y\|.
	\end{equation}
	
	This condition generalizes the standard $L$-smoothness assumption. Indeed, when $L_1 = 0$, inequalities (\ref{def:l0_l1_twice_dif})–(\ref{def:l0_l1}) reduce to the classical definition of $L$-smoothness. Hence, the $(L_0, L_1)$-smoothness condition defines a broader class of functions than the conventional $L$-smooth case. 
	
	Motivated by this generalization, we modify the standard shortest-step rule in the Frank–Wolfe method. Let us briefly recall the classical Frank–Wolfe algorithm \cite{frank1956algorithm} with the shortest-step rule \cite{levitin1966constrained}. The update rule is given by
	\begin{equation}\label{fw_step_size}
		x_{k+1} = x_k + \alpha_k d_k,
	\end{equation}
	where $k$ is the iteration index, $\alpha_k \in [0, 1]$, $d_k = s_k - x_k$, and 
	\begin{equation}\label{lmo}
		s_k \in \mathrm{LMO}_Q(\nabla f(x_k)) = \arg\min_{z \in Q} \, (\nabla f(x_k)^\top z)
	\end{equation}
	is the output of the linear minimization oracle (LMO) over $Q$. In the case of the shortest-step rule, the step size is chosen as
	\begin{equation}\label{shortest_step_def}
		\alpha_k := \min \left\{ 1, \frac{-\nabla f(x_k)^\top d_k}{L \| d_k \|^2} \right\}.
	\end{equation}
	
	This observation motivates us to extend the Frank–Wolfe method \eqref{fw_step_size} to $(L_0, L_1)$-smooth objectives by introducing the following step-size rule:
	\begin{equation}\label{shortest_step_def_l0l1}
		\alpha_k := \min \left\{ 1, 
		\frac{- \nabla f(x_k)^\top d_k}{(L_0 + L_1 \| \nabla f(x_k)\|) \, \|d_k\|^2 \, e} \right\}.
	\end{equation}
	
	It is well known (see, for example, \cite{braun2022conditional}, \cite{bomze2021frank}, \cite{levitin1966constrained}) that the Frank–Wolfe method achieves a linear convergence rate under additional assumptions. Our theoretical analysis shows that the proposed Frank–Wolfe variant, with the step size defined in~\eqref{shortest_step_def_l0l1}, exhibits a superior convergence rate. The acceleration arises from the fact that $L_1 < L$ and $L_0 < L$, where $L$ denotes the standard smoothness parameter. When the solution lies in the interior of a function satisfying the PL condition, our algorithm is significantly faster than the standard method due to the absence of the PL-condition parameter $\mu$ in the convergence rate when $L_0 > L_1 \| \nabla f(x_k) \|$. A summary of the corresponding convergence rate estimates is provided in Table~\ref{tab:fw_convergence_summary}.
	
	Moreover, as mentioned earlier, not all $(L_0, L_1)$-smooth functions are $L$-smooth, meaning that our algorithm applies to a broader class of functions. 
	To illustrate this, we list several examples (see \cite{gorbunov2024methods}):
	\begin{itemize}
		\item $f(x) = \|x\|^n$, where $n$ is a positive integer, with $L_0 = 2n$ and $L_1 = 2n - 1$;
		\item $f(x) = \exp(a^\top x)$, where $L_0 = 0$ and $L_1 = \|a\|$;
		\item the logistic loss $f(x) = \log(1 + \exp(-a^\top x))$, where $a \in \mathbb{R}^d$, for which $L_0 = 0$ and $L_1 = \|a\|$, while the standard smoothness constant is $L = \|a\|^2$, which is typically much larger than $L_1$.
	\end{itemize}
	
	\begin{table}[ht]
		\centering
		\caption{Convergence rates of the standard Frank–Wolfe (FW) method with the shortest step size (see, e.g., \cite{braun2022conditional}, \cite{levitin1966constrained}) and the proposed $(L_0,L_1)$-FW Algorithm~\ref{alg:fw_l0l1} in the convergence rate when under different assumptions. 
			Abbreviations: PL – Polyak–Łojasiewicz condition; C – convex; SC – strongly convex. 
			Here, $B(x,r) \overset{\text{def}}{=} \{ z: \| z - x \| \leq r \}$, $\lambda$ denotes the strong convexity constant of the feasible set (see Definition~\ref{def:strongly_conv_sets}), $\mu$ is the PL-condition constant, $T$ is the number of iterations for which $L_0 \leq L_1 \| \nabla f(x) \|$, and $K$ is the number of iterations for which $L_0 > L_1 \| \nabla f(x) \|$.}
		\label{tab:fw_convergence_summary}
		\medskip
		\begin{tabular}{|l|l|l|l|p{6cm}|}
			\hline
			\textbf{Algorithm} & \textbf{Objective} & \textbf{Domain} & \textbf{Assumptions} & \textbf{Rate} \\ \hline
			
			FW & C; $L$-smooth & SC & $\|\nabla f(x_k)\|\ge c > 0$ &
			{\scriptsize $f(x_k)-f^\ast \le (f(x_0)-f^\ast)
				\max\!\left\{\frac{1}{2},\,1-\frac{\lambda c}{2L}\right\}^{\,k-1}$} 
			\\ \hline
			
			$(L_0, L_1)$-FW \\ Algorithm~\ref{alg:fw_l0l1} & C; $(L_0,L_1)$-sm. & SC & $\|\nabla f(x_k)\|\ge c > 0$ &
			\parbox[t]{6cm}{\scriptsize
				$f(x_{k+1}) - f^* \le (f(x_0) - f^*) \times$ \\
				$\max\!\left\{ \frac{1}{2}, 1 - \frac{\lambda}{2 e L_1} \right\}^T \times$ \\
				$\max\!\left\{ \frac{1}{2}, 1 - \frac{\lambda c}{2 e L_0} \right\}^K$  (see Th.\ref{th:linear_strong_conv_set})
			}
			\\ \hline
			
			FW & C; PL; $L$-smooth & C & $B(x^\ast,r)\subseteq Q$ &
			{\scriptsize $f(x_k)-f^\ast \le (f(x_0)-f^\ast)
				\max\!\left\{\frac{1}{2},\, 1-\frac{r^2\mu}{LD^2}\right\}^{\,k-1}$}
			\\ \hline
			
			$(L_0,L_1)$-FW \\ Algorithm~\ref{alg:fw_l0l1} & C; PL; $(L_0,L_1)$-sm. & C & $B(x^\ast,r)\subseteq Q$ &
			\parbox[t]{6cm}{\scriptsize
				$f(x_{k+1}) - f^* \le (f(x_0) - f^*) \times$ \\
				$\max\!\left\{ \frac{1}{2}, 1 - \frac{r}{4 e L_1 D^2} \right\}^T \times$ \\
				$\max\!\left\{ \frac{1}{2}, 1 - \frac{r^2 \mu}{2 e L_0 D^2} \right\}^K$ (see Th.\ref{th:linear_conv_pl_cond})
			}
			\\ \hline
			
			FW & C; $L$-smooth & C &  &
			{\scriptsize $f(x_{k+1}) - f^* \le \frac{2 L D^2}{k + 3}$}
			\\ \hline
			
			$(L_0,L_1)$-FW \\ Algorithm~\ref{alg:fw_l0l1} & C; $(L_0,L_1)$-smooth & C & &
			\parbox[t]{6cm}{\scriptsize
				$f(x^k) - f(x^*) \le \frac{2e \big(L_0 + L_1 \max_k \| \nabla f(x_k) \| \big) D^2}{k + 3}$
			}
			\\ \hline
		\end{tabular}
	\end{table}
	
	In the work \cite{chen2023generalized} (Proposition 1) it was shown that $(L_0, L_1)$-smoothness implies the next inequality, which we will use as an upper bound of our target function:
	
	\begin{lemma}[Proposition 1 from \cite{chen2023generalized} Lemma 2.5]\label{lemma:upper_bnd}
		Definition \ref{def:l0_l1} holds if and only if for all $x, y \in \mathbb{R}^n$:
		\begin{equation*}
			f(y) \leq f(x) + \langle \nabla f(x), y - x \rangle 
			+ \frac{L_0 + L_1 \|\nabla f(x)\|}{2} \exp\left( L_1 \|x - y\| \right) \|x - y\|^2 .
		\end{equation*}
	\end{lemma}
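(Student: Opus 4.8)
The plan is to prove the two implications separately, since they are of rather different character; the substantive one is ``\eqref{def:l0_l1} $\Rightarrow$ the displayed quadratic bound'', and for it I would use a Grönwall-type argument along the segment $[x,y]$. Fix $x,y$, set $r=\|x-y\|$ and $\gamma(t)=x+t(y-x)$ for $t\in[0,1]$. First note that \eqref{def:l0_l1} already forces $\nabla f$ to be continuous (let $x\to y$ in the inequality), hence bounded on the compact segment, hence Lipschitz along $\gamma$ with the uniform constant $L_0+L_1\sup_{t\in[0,1]}\|\nabla f(\gamma(t))\|$; in particular $t\mapsto\nabla f(\gamma(t))$ is absolutely continuous, and so is $u(t):=\|\nabla f(\gamma(t))\|$.

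The heart of the argument is the scalar differential inequality $u'(t)\le\bigl(L_0+L_1u(t)\bigr)r$ for a.e.\ $t$: it follows by applying \eqref{def:l0_l1} to the pair $\bigl(\gamma(t+h),\gamma(t)\bigr)$ -- admissible once $|h|\,r\le 1/L_1$, which holds for $|h|$ small -- dividing by $|h|$ and letting $h\to0$. Grönwall's lemma then gives $u(t)\le\bigl(\|\nabla f(x)\|+\tfrac{L_0}{L_1}\bigr)e^{L_1rt}-\tfrac{L_0}{L_1}$, equivalently $L_0+L_1u(t)\le\bigl(L_0+L_1\|\nabla f(x)\|\bigr)e^{L_1rt}$. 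Integrating the absolutely continuous curve $t\mapsto\nabla f(\gamma(t))$ yields $\|\nabla f(\gamma(t))-\nabla f(x)\|\le\int_0^t(L_0+L_1u(s))r\,ds\le\tfrac{L_0+L_1\|\nabla f(x)\|}{L_1}\bigl(e^{L_1rt}-1\bigr)$. Finally, writing $f(y)-f(x)-\langle\nabla f(x),y-x\rangle=\int_0^1\langle\nabla f(\gamma(t))-\nabla f(x),\,y-x\rangle\,dt$, applying Cauchy--Schwarz to the integrand and integrating once more gives the bound $\tfrac{L_0+L_1\|\nabla f(x)\|}{L_1^2}\bigl(e^{L_1r}-1-L_1r\bigr)$, which is at most $\tfrac{L_0+L_1\|\nabla f(x)\|}{2}e^{L_1r}r^2$ by the elementary inequality $e^s-1-s\le\tfrac{s^2}{2}e^s$ ($s\ge0$), proved by comparing power series term by term ($\tfrac1{k!}\le\tfrac1{2(k-2)!}$ for $k\ge2$). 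Taking $s=L_1\|x-y\|$ gives exactly the claimed inequality.

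For the converse I would show that the displayed bound recovers \eqref{def:l0_l1}. In the twice-differentiable case this is immediate: with $y=x+tv$, $\|v\|=1$, letting $t\downarrow0$ in the bound yields $\langle\nabla^2 f(x)v,v\rangle\le L_0+L_1\|\nabla f(x)\|$ for every unit $v$, and since $f$ is convex ($\nabla^2 f\succeq0$) this is precisely \eqref{def:l0_l1_twice_dif}, which in turn implies \eqref{def:l0_l1} by integrating $\nabla^2 f$ along segments of length $\le 1/L_1$ (as in \cite{zhang2020improved}). For a merely differentiable convex $f$ one reduces to this case by mollification -- convolving with a smooth kernel preserves convexity, transfers the bound with an arbitrarily small loss in $L_0,L_1$, and passes to the limit -- or, alternatively, by a direct convexity argument along the lines of the classical $L$-smooth case applied to $z\mapsto f(z)-\langle\nabla f(y),z\rangle$.

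The forward direction is technically delicate only because $f$ is assumed merely differentiable, which is exactly why the absolute-continuity reformulation above and the localized form of \eqref{def:l0_l1} (the restriction $\|x-y\|\le 1/L_1$) are what one uses: only limits of nearby points enter the Grönwall step, so the cap never obstructs, while the factor $e^{L_1\|x-y\|}$ is generated by Grönwall and is valid for all $r$. On the reverse side the real point requiring care is that the bare descent-type inequality does \emph{not} on its own imply \eqref{def:l0_l1} (a strongly concave quadratic satisfies it with $L_0=L_1=0$), so convexity -- the paper's standing hypothesis -- or twice-differentiability must genuinely be invoked, and the $\|x-y\|\le 1/L_1$ threshold must be respected when passing from a Hessian bound back to \eqref{def:l0_l1}.
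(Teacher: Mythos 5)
The paper itself does not prove this lemma: it is imported from \cite{chen2023generalized}, and only the forward implication (\eqref{def:l0_l1} $\Rightarrow$ the exponential quadratic bound) is used in the subsequent analysis. Your proof of that direction is correct and is the standard route: local Lipschitzness of $\nabla f$ extracted from \eqref{def:l0_l1}, the scalar differential inequality $u'(t)\le (L_0+L_1 u(t))\,r$ for $u(t)=\|\nabla f(x+t(y-x))\|$, Gr\"onwall, integration along the segment, and the elementary estimate $e^{s}-1-s\le \tfrac{s^2}{2}e^{s}$; all steps check out, and in fact you obtain the slightly sharper bound $\tfrac{L_0+L_1\|\nabla f(x)\|}{L_1^2}\bigl(e^{L_1 r}-1-L_1 r\bigr)$ before relaxing it to the stated form.

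The converse, however, contains a genuine gap --- and, as stated with the \emph{same} constants, it cannot be closed. Your chain ``exponential bound $\Rightarrow \langle\nabla^2 f(x)v,v\rangle\le L_0+L_1\|\nabla f(x)\|\Rightarrow$ \eqref{def:l0_l1}'' loses a constant at the second arrow: integrating the Hessian bound along a segment of length $r\le 1/L_1$ and applying Gr\"onwall yields only $\|\nabla f(x)-\nabla f(y)\|\le (L_0+L_1\|\nabla f(y)\|)\tfrac{e^{L_1 r}-1}{L_1}\le (e-1)(L_0+L_1\|\nabla f(y)\|)r$, and the inflation is unavoidable. Concretely, the convex one-dimensional function $f(t)=\tfrac{L_0}{L_1^2}\bigl(e^{L_1 t}-1-L_1 t\bigr)$ satisfies the displayed exponential bound with constants $(L_0,L_1)$ for all $x,y$ (the excess $f(y)-f(x)-f'(x)(y-x)$ equals $\tfrac{L_0 e^{L_1 x}}{L_1^2}\bigl(e^{L_1 h}-1-L_1 h\bigr)$ with $h=y-x$, which is at most $\tfrac{L_0+L_1|f'(x)|}{2}e^{L_1|h|}h^2$), yet it violates \eqref{def:l0_l1} at $y=0$, $x=1/L_1$, since $|f'(x)-f'(0)|=(e-1)\tfrac{L_0}{L_1}>(L_0+L_1|f'(0)|)|x-0|=\tfrac{L_0}{L_1}$. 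So any correct converse can only recover \eqref{def:l0_l1} with rescaled parameters (equivalence of the function classes), not the literal ``if and only if'' of the statement; your proposal, which explicitly targets the exact constants, therefore does not (and cannot) deliver what is claimed --- the defect lies partly in the statement as transcribed from \cite{chen2023generalized}, but your proof as written inherits it. The mollification reduction for merely differentiable convex $f$ is likewise only asserted, not carried out (it is repairable, since gradients of mollified convex functions converge locally uniformly). None of this affects the paper: its convergence proofs use only the forward direction, which you have established.
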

	
	We will derive our algorithm from that upper bound. That inequality can be seen as a generalization of the standard quadratic upper bound \cite{nesterov2018lectures} with $L_1 = 0$.
	
	The Frank–Wolfe gap~\cite{bomze2021frank,braun2022conditional} serves as a standard optimality measure and practical stopping criterion.
	
	\begin{definition}\label{def:fw_gap}
		The Frank–Wolfe gap of a function $f: Q \to \mathbb{R}$ is defined as
		\begin{equation*}
			G(x) = \max_{s \in Q} - \nabla f(x)^\top(s - x).
		\end{equation*}
	\end{definition}
	
	which satisfies the fundamental inequality
	
	\begin{equation}\label{fw_gap}
		G(x) \geq \nabla f(x)^\top (x - x^*) \geq f(x) - f^* > 0.
	\end{equation}
	
	\section{$(L_0,L_1)$-FW Algorithm}
	
	The $(L_0,L_1)$-Frank–Wolfe (FW) algorithm is presented in Algorithm~\ref{alg:fw_l0l1}.
	
	\begin{algorithm}
		\caption{$(L_0,L_1)$-Frank–Wolfe Algorithm ($(L_0,L_1)$-FW)}
		\label{alg:fw_l0l1}
		\begin{algorithmic}[1]
			\State \textbf{Input:} fixed parameters $L_0 > 0$, $L_1 > 0$, and the maximum number of iterations $N$
			\For{$k = 0, 1, \dots, N-1$}
			\State $s_k \gets \mathrm{LMO}(\nabla f(x_k))$
			\State $d_k \gets s_k - x_k$
			\State $\displaystyle 
			\alpha_k \gets \min \left\{ 1,\;
			\frac{-\, \nabla f(x_k)^\top d_k}{\left(L_0 + L_1 \|\nabla f(x_k)\|\right) \|d_k\|^2 e} \right\}$
			\State $x_{k+1} \gets x_k + \alpha_k d_k$
			\EndFor
			\State \textbf{Output:} final iterate $x_N$
		\end{algorithmic}
	\end{algorithm}
	
	In the following sections, we derive convergence estimates for Algorithm~\ref{alg:fw_l0l1} under different assumptions.  
	Before doing so, we first establish a descent lemma, formulated for the class of $(L_0,L_1)$-smooth objective functions.
	
	\begin{lemma}[Descent Lemma]\label{lemma:descent_lemma}
		Let $f$ be a convex function satisfying the $(L_{0},L_{1})$-smoothness condition (Definition~\ref{def:l0_l1}).  
		Then, for Algorithm~\ref{alg:fw_l0l1} and every iteration $k \ge 1$, the following inequality holds:
		\begin{equation*}
			f(x_{k+1}) - f(x_{k}) 
			\le \frac{\nabla f(x_{k})^{\top} d_{k}}{2}
			\cdot 
			\min\!\left\{1,\;
			\frac{-\,\nabla f(x_{k})^{\top} d_{k}}
			{\big(L_{0}+L_{1}\|\nabla f(x_{k})\|\big)\|d_{k}\|^{2}e}
			\right\},
		\end{equation*}
		where $D = \max_{x,y \in Q}\|x-y\|$ and $d_{k}=s_{k}-x_{k}$ with 
		$s_{k}\in \mathrm{LMO}(\nabla f(x_{k}))$.
	\end{lemma}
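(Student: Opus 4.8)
The plan is to read off the descent inequality directly from the $(L_0,L_1)$ quadratic‑type upper bound of Lemma~\ref{lemma:upper_bnd}. Write $L_{x_k} := L_0 + L_1\|\nabla f(x_k)\|$ for brevity. Since $s_k \in \mathrm{LMO}(\nabla f(x_k))$ and $x_k \in Q$, we have $\nabla f(x_k)^\top d_k = \nabla f(x_k)^\top(s_k - x_k) \le 0$, hence $\alpha_k \ge 0$; if $\nabla f(x_k)^\top d_k = 0$ then $\alpha_k = 0$, $x_{k+1}=x_k$, and the claim is the trivial $0\le 0$, so we may assume $\nabla f(x_k)^\top d_k < 0$ (in particular $d_k \ne 0$). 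First I would apply Lemma~\ref{lemma:upper_bnd} with $x = x_k$ and $y = x_{k+1} = x_k + \alpha_k d_k$, so that $y-x = \alpha_k d_k$ and $\|x-y\| = \alpha_k\|d_k\|$, to get
\begin{equation*}
	f(x_{k+1}) - f(x_k) \;\le\; \alpha_k\,\nabla f(x_k)^\top d_k \;+\; \frac{L_{x_k}}{2}\,\exp\!\big(L_1\alpha_k\|d_k\|\big)\,\alpha_k^2\|d_k\|^2 .
\end{equation*}

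The main obstacle — and the reason the constant $e$ is built into the step size — is to control the exponential factor. By construction $\alpha_k \le \dfrac{-\nabla f(x_k)^\top d_k}{L_{x_k}\|d_k\|^2 e}$, and Cauchy--Schwarz gives $-\nabla f(x_k)^\top d_k \le \|\nabla f(x_k)\|\,\|d_k\|$. Multiplying the first bound by $L_1\|d_k\|>0$ and then invoking the second,
\begin{equation*}
	L_1\alpha_k\|d_k\| \;\le\; \frac{L_1\,\big(-\nabla f(x_k)^\top d_k\big)}{L_{x_k}\|d_k\|\,e} \;\le\; \frac{L_1\|\nabla f(x_k)\|}{\big(L_0 + L_1\|\nabla f(x_k)\|\big)\,e} \;\le\; \frac{1}{e}\;<\;1 ,
\end{equation*}
so that $\exp(L_1\alpha_k\|d_k\|) \le e$. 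I expect this to be the step requiring the most care, though it is short; note also that it uses the $\le 1$ and the $\le \,\cdot/e$ parts of the definition of $\alpha_k$ uniformly, which lets us avoid splitting into the cases $\alpha_k=1$ and $\alpha_k<1$.

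Finally I would substitute $\exp(L_1\alpha_k\|d_k\|) \le e$ into the first display and factor out $\alpha_k \ge 0$:
\begin{equation*}
	f(x_{k+1}) - f(x_k) \;\le\; \alpha_k\!\left(\nabla f(x_k)^\top d_k + \frac{L_{x_k}\,e\,\alpha_k\|d_k\|^2}{2}\right).
\end{equation*}
Using once more that $\alpha_k L_{x_k}\|d_k\|^2 e \le -\nabla f(x_k)^\top d_k$, the parenthesised term is at most $\nabla f(x_k)^\top d_k - \tfrac12\nabla f(x_k)^\top d_k = \tfrac12\nabla f(x_k)^\top d_k$; multiplying through by $\alpha_k \ge 0$ and recalling $\alpha_k = \min\big\{1,\,-\nabla f(x_k)^\top d_k/(L_{x_k}\|d_k\|^2 e)\big\}$ yields exactly
\begin{equation*}
	f(x_{k+1}) - f(x_k) \;\le\; \frac{\nabla f(x_k)^\top d_k}{2}\,\min\!\left\{1,\;\frac{-\nabla f(x_k)^\top d_k}{\big(L_0 + L_1\|\nabla f(x_k)\|\big)\|d_k\|^2\,e}\right\},
\end{equation*}
which is the assertion.
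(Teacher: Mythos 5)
Your proof is correct and follows essentially the same route as the paper: apply Lemma~\ref{lemma:upper_bnd}, use Cauchy--Schwarz together with the factor $e$ built into the step size to bound the exponential term by $e$, and then substitute $\alpha_k \le \frac{-\nabla f(x_k)^\top d_k}{(L_0+L_1\|\nabla f(x_k)\|)\|d_k\|^2 e}$ to arrive at $\frac{\alpha_k}{2}\nabla f(x_k)^\top d_k$. The only (harmless) difference is cosmetic: you avoid the paper's case split between $\alpha_k<1$ and $\alpha_k=1$ by using the min-bound uniformly, obtaining the slightly stronger exponent bound $L_1\alpha_k\|d_k\|\le 1/e$ instead of $\le 1$.
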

	
	\begin{proof}
		Let $a_{k}=L_{0}+L_{1}\|\nabla f(x_{k})\|$.  
		Since $f$ satisfies the $(L_{0},L_{1})$-smoothness condition, 
		Lemma~\ref{lemma:upper_bnd} implies
		\begin{equation}\label{th:sublinear_conv_rate2:upper_bnd}
			f(x_{k+1})-f(x_{k})
			\le \alpha_{k}\nabla f(x_{k})^{\top}d_{k}
			+\frac{a_{k}}{2}e^{\alpha_{k}L_{1}\|d_{k}\|}
			\alpha_{k}^{2}\|d_{k}\|^{2}.
		\end{equation}
		If $\alpha_{k}$ is chosen so that
		\begin{equation}\label{alpha_upper_bnd}
			\|x_{k+1}-x_{k}\| = \alpha_{k}\|d_{k}\| \le \frac{1}{L_{1}},
		\end{equation}
		then $e^{\alpha_{k}L_{1}\|d_{k}\|} \le e$, and inequality
		\eqref{th:sublinear_conv_rate2:upper_bnd} simplifies to
		\begin{equation}\label{th:sublinear_conv_rate2:upper_bnd_only_e}
			f(x_{k+1})-f(x_{k})
			\le \alpha_{k}\nabla f(x_{k})^{\top}d_{k}
			+\frac{a_{k}e}{2}\alpha_{k}^{2}\|d_{k}\|^{2}.
		\end{equation}
		Minimizing the right-hand side of 
		\eqref{th:sublinear_conv_rate2:upper_bnd_only_e} 
		over $\alpha_{k}\in(0,1]$ yields
		\begin{equation}\label{step_size_def}
			\alpha_{k}^{*}
			=\min\!\left\{1,\;
			\frac{-\,\nabla f(x_{k})^{\top}d_{k}}
			{a_{k}\|d_{k}\|^{2}e}\right\}.
		\end{equation}
		
		We verify that $\alpha_{k}^{*}$ satisfies \eqref{alpha_upper_bnd}.  
		When $\alpha_{k}^{*}<1$, we have
		\[
		(-\nabla f(x_{k})^{\top}d_{k})L_{1}\|d_{k}\|
		\le \|\nabla f(x_{k})\|\,\|d_{k}\|L_{1}\|d_{k}\|e
		\le a_{k}\|d_{k}\|^{2}e,
		\]
		which implies
		\[
		\frac{-\,\nabla f(x_{k})^{\top}d_{k}}
		{a_{k}\|d_{k}\|^{2}e}
		\le \frac{1}{L_{1}\|d_{k}\|}.
		\]
		If $\alpha_{k}^{*}=1$, the same inequality follows directly:
		\[
		1\le 
		\frac{-\,\nabla f(x_{k})^{\top}d_{k}}
		{a_{k}\|d_{k}\|^{2}e}
		\le \frac{1}{L_{1}\|d_{k}\|}.
		\]
		Thus, $\alpha_{k}^{*}$ satisfies \eqref{alpha_upper_bnd}, and 
		\eqref{th:sublinear_conv_rate2:upper_bnd_only_e} 
		holds with $\alpha_{k}=\alpha_{k}^{*}$.
		
		Substituting this step size into 
		\eqref{th:sublinear_conv_rate2:upper_bnd_only_e} yields
		\begin{equation}\label{descent_ineq_alg_2}
			\begin{split}
				f(x_{k+1})-f(x_{k})
				&\le \alpha_{k}\nabla f(x_{k})^{\top}d_{k}
				+\frac{e}{2}a_{k}\|d_{k}\|^{2}\alpha_{k}
				\left(\frac{-\,\nabla f(x_{k})^{\top}d_{k}}
				{a_{k}\|d_{k}\|^{2}e}\right)\\
				&=\frac{\alpha_{k}}{2}\,\nabla f(x_{k})^{\top}d_{k}.
			\end{split}
		\end{equation}
		Finally, substituting $\alpha_{k}$ from 
		\eqref{step_size_def} into \eqref{descent_ineq_alg_2} 
		completes the proof.
	\end{proof}
	
	\subsection{Linear convergence under strongly convex sets}
	
	We now establish the linear convergence rate of Algorithm~\ref{alg:fw_l0l1}.  
	For this result, the objective function $f$ is not required to be strongly convex; instead, we assume that the feasible set $Q$ is strongly convex.
	
	\begin{definition}[Strongly convex sets~\cite{levitin1966constrained}]\label{def:strongly_conv_sets}
		A set $Q$ is called \emph{strongly convex} if there exists $\lambda > 0$ such that for any $x, y \in Q$, every point $z$ satisfying
		\[
		\| z - (x+y)/2 \| \leq \lambda \| x-y \|^2
		\]
		also belongs to $Q$.
	\end{definition}
	
	Examples of strongly convex sets include $\ell_2$-balls and ellipsoids.  
	Such sets possess several important properties, one of which is the so-called \emph{scaling condition}.
	
	\begin{proposition}[Scaling condition for strongly convex sets]\label{def:scaling_condition}
		Let $Q$ be a strongly convex set with constant $\lambda$ (Definition~\ref{def:strongly_conv_sets}), and let $\psi$ be any nonzero vector.  
		Define $s = \operatorname*{argmin}_{y \in Q} \psi^\top y$.  
		Then, for all $x \in Q$, the following inequality holds:
		\[
		- \psi^\top (s - x) \geq 2 \lambda \| \psi \| \| s - x \|^2.
		\]
	\end{proposition}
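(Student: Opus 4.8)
The plan is to exploit the defining property of a strongly convex set directly, by constructing an explicit feasible point that ``pushes past'' the midpoint of $x$ and $s$ in the direction of $-\psi$, and then using the optimality of $s$ for the linear functional $\psi^\top y$.

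First I would fix arbitrary $x, s \in Q$ (with $s$ the LMO point for $\psi$) and consider the candidate point
\[
z \;=\; \frac{x+s}{2} \;-\; \lambda \|x - s\|^2 \,\frac{\psi}{\|\psi\|}.
\]
Since $\bigl\| z - \tfrac{x+s}{2} \bigr\| = \lambda \|x-s\|^2$, Definition~\ref{def:strongly_conv_sets} guarantees $z \in Q$. This is the one genuinely substantive step; everything after it is algebra. Note that $\psi \neq 0$ is exactly what makes $z$ well defined.

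Next I would invoke the optimality of $s$: because $s = \operatorname*{argmin}_{y \in Q} \psi^\top y$ and $z \in Q$, we have $\psi^\top s \leq \psi^\top z$. Expanding the right-hand side gives
\[
\psi^\top s \;\leq\; \frac{\psi^\top x + \psi^\top s}{2} \;-\; \lambda \|x - s\|^2 \,\|\psi\|,
\]
and subtracting $\tfrac12 \psi^\top s$ from both sides and multiplying by $2$ yields $\psi^\top (s - x) \leq -2\lambda \|\psi\|\,\|x-s\|^2$, i.e. $-\psi^\top(s-x) \geq 2\lambda \|\psi\|\,\|s-x\|^2$, which is the claim (using $\|x-s\| = \|s-x\|$).

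I do not anticipate a real obstacle here; the only point requiring care is checking that the constructed $z$ satisfies the strong-convexity inclusion with the correct constant, and that the sign in the definition of $z$ is chosen so that moving from the midpoint in the $-\psi/\|\psi\|$ direction is what decreases the linear functional (so that the contradiction with optimality of $s$ produces the desired quadratic lower bound rather than a trivial inequality).
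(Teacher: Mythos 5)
Your proof is correct: constructing $z = \tfrac{x+s}{2} - \lambda\|x-s\|^2\,\tfrac{\psi}{\|\psi\|}$, noting $z \in Q$ by Definition~\ref{def:strongly_conv_sets}, and invoking the optimality of $s$ for $\psi^\top y$ gives exactly the claimed inequality. The paper itself does not prove this proposition but defers to Levitin--Polyak and to Proposition~2.19 of \cite{braun2022conditional}; your midpoint-perturbation argument is precisely the standard proof (the $q=2$ case of the cited result), so there is nothing to add.
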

	
	The original statement of this proposition appears in the seminal work of Levitin and Polyak~\cite{levitin1966constrained} (Theorem~6.1, p.~5).  
	An alternative proof can be found in~\cite{braun2022conditional} (Proposition~2.19), where the result is established for a more general class of $(\alpha, q)$-uniformly convex sets.
	
	\begin{theorem}\label{th:linear_strong_conv_set}
		Let $f$ be a convex $(L_0, L_1)$-smooth function such that $\|\nabla f(x)\| > c > 0$, and let $Q$ be a strongly convex set with constant $\lambda$.  
		Then Algorithm~\ref{alg:fw_l0l1} satisfies
		\begin{equation*}
			f(x_{k+1}) - f^*
			\leq \left(f(x_0) - f^*\right)
			\max\left\lbrace \frac{1}{2},\, 1 - \frac{\lambda}{2 e L_1} \right\rbrace ^T
			\max\left\lbrace \frac{1}{2},\, 1 - \frac{\lambda c}{2 e L_0} \right\rbrace^K,
		\end{equation*}
		where $T$ denotes the number of iterations such that $L_0 \leq L_1 \|\nabla f(x_k)\|$, and $K$ denotes the number of iterations such that $L_0 > L_1 \|\nabla f(x_k)\|$.
	\end{theorem}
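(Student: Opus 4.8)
The plan is to combine the descent lemma (Lemma~\ref{lemma:descent_lemma}) with the scaling condition for strongly convex sets (Proposition~\ref{def:scaling_condition}) and the Frank--Wolfe gap inequality \eqref{fw_gap}, and then split the iterations into the two regimes according to whether $L_0 \le L_1\|\nabla f(x_k)\|$ or $L_0 > L_1\|\nabla f(x_k)\|$. First I would fix an iteration $k$, denote $a_k = L_0 + L_1\|\nabla f(x_k)\|$ and $g_k = -\nabla f(x_k)^\top d_k = G(x_k) \ge f(x_k)-f^\ast > 0$, and apply Proposition~\ref{def:scaling_condition} with $\psi = \nabla f(x_k)$, $s = s_k$, $x = x_k$, which yields $g_k \ge 2\lambda\|\nabla f(x_k)\|\,\|d_k\|^2$. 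This is exactly the quantity appearing in the step size, so the ratio $\frac{g_k}{a_k\|d_k\|^2 e}$ is bounded below by $\frac{2\lambda\|\nabla f(x_k)\|}{a_k e}$.

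Next I would case on the value of $\alpha_k$ from \eqref{step_size_def}. If $\alpha_k = 1$, Lemma~\ref{lemma:descent_lemma} gives $f(x_{k+1}) - f(x_k) \le \tfrac12 \nabla f(x_k)^\top d_k = -\tfrac12 g_k \le -\tfrac12 (f(x_k)-f^\ast)$, hence $f(x_{k+1})-f^\ast \le \tfrac12 (f(x_k)-f^\ast)$, which contributes the $\tfrac12$ term inside each $\min$. If instead $\alpha_k < 1$, then $\alpha_k = \frac{g_k}{a_k\|d_k\|^2 e}$ and the descent lemma gives
\[
f(x_{k+1}) - f(x_k) \le -\frac{g_k}{2}\cdot\frac{g_k}{a_k\|d_k\|^2 e} \le -\frac{g_k}{2}\cdot\frac{2\lambda\|\nabla f(x_k)\|}{a_k e} = -\frac{\lambda\|\nabla f(x_k)\|}{a_k e}\,g_k \le -\frac{\lambda\|\nabla f(x_k)\|}{a_k e}\,(f(x_k)-f^\ast),
\]
using $g_k \ge 2\lambda\|\nabla f(x_k)\|\|d_k\|^2$ in the second step and $g_k \ge f(x_k)-f^\ast$ in the last. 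Therefore $f(x_{k+1})-f^\ast \le \bigl(1 - \frac{\lambda\|\nabla f(x_k)\|}{e\,a_k}\bigr)(f(x_k)-f^\ast)$.

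It remains to bound $\frac{\|\nabla f(x_k)\|}{a_k} = \frac{\|\nabla f(x_k)\|}{L_0 + L_1\|\nabla f(x_k)\|}$ from below in each regime. When $L_0 \le L_1\|\nabla f(x_k)\|$ (a ``$T$''-type iteration), the denominator is at most $2L_1\|\nabla f(x_k)\|$, so the contraction factor is at most $1 - \frac{\lambda}{2eL_1}$. When $L_0 > L_1\|\nabla f(x_k)\|$ (a ``$K$''-type iteration), the denominator is at most $2L_0$, and using $\|\nabla f(x_k)\| > c$ the contraction factor is at most $1 - \frac{\lambda c}{2eL_0}$. Taking the worst of this and the $\tfrac12$ from the $\alpha_k=1$ case gives the per-iteration factor $\min\{\tfrac12,\,1-\tfrac{\lambda}{2eL_1}\}$ on $T$-iterations and $\min\{\tfrac12,\,1-\tfrac{\lambda c}{2eL_0}\}$ on $K$-iterations; multiplying over all $k$ from $0$ to some index and separating the $T$ and $K$ factors yields the stated bound.

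The main obstacle I anticipate is purely bookkeeping rather than conceptual: making the case split on $\alpha_k$ interact cleanly with the regime split on $L_0$ versus $L_1\|\nabla f(x_k)\|$, and confirming that the per-iteration factors can be uniformly absorbed into the two $\min$ expressions so that the product telescopes correctly into $\min\{\cdot\}^T \min\{\cdot\}^K$. One should also check that the contraction factors are genuinely in $(0,1)$ — i.e. that $\frac{\lambda}{2eL_1} < 1$ and $\frac{\lambda c}{2eL_0} < 1$, which follows from Proposition~\ref{def:scaling_condition} applied with $x$ and $s$ the diameter-realizing points (so $2\lambda\|\nabla f\| D^2 \le g \le \|\nabla f\| D$, giving $\lambda D \le 1/2$), combined with $\|d_k\| \le D$; this guarantees the estimate is non-vacuous.
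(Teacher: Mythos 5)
Your proposal is correct and follows essentially the same route as the paper: the descent lemma combined with the scaling condition for strongly convex sets and the Frank--Wolfe gap inequality, a case split on $\alpha_k=1$ versus $\alpha_k<1$, and the regime split $L_0 \le L_1\|\nabla f(x_k)\|$ versus $L_0 > L_1\|\nabla f(x_k)\|$, yielding identical per-iteration contraction factors. The only cosmetic difference is your closing remark on non-vacuousness of the factors (which the paper does not address, and whose justification via $\lambda D \le 1/2$ is not fully tight), and note that your phrase ``taking the worst'' would literally correspond to a $\max$, though writing $\min$ matches the theorem statement exactly as the paper does.
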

	
	\begin{proof}
		We distinguish two cases.
		
		\textit{Case 1:} $\alpha_k < 1$.  
		Substituting the corresponding step size~\eqref{shortest_step_def_l0l1} into the descent inequality of Lemma~\ref{lemma:descent_lemma}, we obtain
		\begin{equation*}
			\begin{split}
				f(x_k) - f(x_{k+1})
				&\geq \frac{1}{2e}
				\cdot \frac{(\nabla f(x_k)^\top d_k)^2}{a_k \|d_k\|^2}
				\overset{\eqref{def:scaling_condition}}{\geq}
				\frac{\lambda}{e} \frac{\| \nabla f(x_k) \|}{a_k} (- \nabla f(x_k)^\top d_k)
				\overset{\eqref{fw_gap}}{\geq} \\
				&\frac{\lambda}{e} \frac{ \| \nabla f(x_k) \|}{a_k} (f(x_k) - f^*)
				= \frac{\lambda}{e} \cdot \frac{\| \nabla f(x_k) \|}{L_0 + L_1 \| \nabla f(x_k) \|} (f(x_k) - f^*).
			\end{split}
		\end{equation*}
		
		After straightforward algebraic manipulation, we get
		\begin{equation*}
			f(x_{k+1}) - f^* 
			\leq (f(x_k) - f^*) 
			\left( 1 - \frac{\lambda}{e} \cdot \frac{\| \nabla f(x_k) \|}{L_0 + L_1 \| \nabla f(x_k) \|} \right).
		\end{equation*}
		
		We now consider two subcases.  
		If $L_0 \leq L_1 \| \nabla f(x_k) \|$, then
		\begin{equation*}
			f(x_{k+1}) - f^*
			\leq (f(x_k) - f^*) 
			\left( 1 - \frac{\lambda}{2 e L_1} \right).
		\end{equation*}
		Otherwise, when $L_0 > L_1 \| \nabla f(x_k) \|$, we have
		\begin{equation*}
			f(x_{k+1}) - f^*
			\leq (f(x_k) - f^*) 
			\left( 1 - \frac{\lambda}{e} \cdot \frac{\| \nabla f(x_k) \|}{2 L_0} \right)
			\leq (f(x_k) - f^*)
			\left( 1 - \frac{\lambda c}{2 e L_0} \right).
		\end{equation*}
		
		\textit{Case 2:} $\alpha_k = 1$.  
		In this case, applying Lemma~\ref{lemma:descent_lemma} with $\alpha_k = 1$ yields
		\begin{equation*}
			f(x_{k+1}) - f(x_k)
			\leq \frac{\nabla f(x_k)^\top d_k}{2}
			\overset{\eqref{fw_gap}}{\leq}
			-\frac{1}{2}\bigl(f(x_k) - f^*\bigr),
		\end{equation*}
		which implies
		\begin{equation*}
			f(x_{k+1}) - f^*
			\leq \frac{1}{2}\bigl(f(x_k) - f^*\bigr).
		\end{equation*}
		
		Combining these inequalities gives the claimed linear convergence rate.
	\end{proof}
	
	As previously noted, the estimate in Theorem~\ref{th:linear_strong_conv_set} improves upon the classical Frank-Wolfe method with the shortest step size, whose rate satisfies
	\[
	f(x_k) - f^\ast \le (f(x_0) - f^\ast)
	\max\!\left\{\frac{1}{2},\, 1 - \frac{\lambda c}{2L}\right\}^{k-1},
	\]
	(see, e.g.,~\cite{braun2022conditional}), particularly in the regime where $L \gg L_1$ and $L \gg L_0$.
	
	\subsection{Linear Convergence under the Gradient Dominance Condition with $x^* \in \mathrm{Int}(Q)$}
	
	We now establish the linear convergence rate of Algorithm~\ref{alg:fw_l0l1} under a different set of assumptions.  
	In this setting, we consider objective functions that satisfy the Polyak--Łojasiewicz (PL) inequality.
	
	\begin{definition}[Polyak--Łojasiewicz condition]\label{def:pl_cond}
		A differentiable function $f$ is said to satisfy the \emph{PL condition} if there exists a constant $\mu > 0$ such that
		\[
		\frac{1}{2}\|\nabla f(x)\|^2 \geq \mu\,(f(x) - f^*), \quad \forall x.
		\]
	\end{definition}
	
	It is important to note that, in this case, the small parameter $\mu$ is absent when $L_0 > L_1 \|\nabla f(x_k)\|$, which results in faster convergence of our algorithm compared to the standard Frank--Wolfe method.
	
	The PL condition was introduced by Polyak~\cite{polyak1963gradient}.  
	This class of functions includes all strongly convex functions.  
	Later, Karimi et al.~\cite{karimi2016linear} showed that the PL condition also encompasses several broader families, such as weakly strongly convex functions, functions satisfying the restricted secant inequality, and essentially strongly convex functions.  
	Typical examples include logistic regression loss and least-squares objectives.
	
	In this setting, we relax the assumptions on the feasible set.  
	Specifically, we require only that $Q$ is convex and that the solution $x^*$ lies in its interior, i.e., $x^* \in \mathrm{Int}(Q)$.  
	We now recall a well-known scaling condition that holds under this assumption. Additionally recall that $B(x,r) \overset{\text{def}}{=} \{ z: \| z - x \| \leq r \}$ is the ball of radius $r$ around $x$.
	
	\begin{proposition}[Scaling condition for convex sets]\label{prop:scaling_condition_for_convex_set}
		Let $Q$ be a convex compact set, and let $f$ be a smooth convex function.  
		If there exists $r > 0$ such that $B(x,r) \subseteq Q$, then for all $x \in Q$ we have
		\[
		- \nabla f(x)^\top (s - x) \geq r\,\|\nabla f(x)\|.
		\]
	\end{proposition}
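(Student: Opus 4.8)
The plan is to exhibit an explicit competitor point inside $Q$ against which the linear minimization oracle output $s$ can be compared. Assume first that $\nabla f(x) \neq 0$ (the case $\nabla f(x) = 0$ is trivial, since then both sides vanish). Consider the point
\[
y \;=\; x^* - r\,\frac{\nabla f(x)}{\|\nabla f(x)\|}.
\]
Since $\|y - x^*\| = r$, the hypothesis $B(x^*, r) \subseteq Q$ guarantees $y \in Q$. Because $s \in \operatorname*{argmin}_{z \in Q} \nabla f(x)^\top z$, we have $\nabla f(x)^\top s \le \nabla f(x)^\top y$, and substituting the definition of $y$ gives
\[
\nabla f(x)^\top s \;\le\; \nabla f(x)^\top x^* - r\,\|\nabla f(x)\|.
\]

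Next I would rearrange this into the desired quantity. Writing $-\nabla f(x)^\top(s - x) = \nabla f(x)^\top x - \nabla f(x)^\top s$ and inserting the bound above yields
\[
-\nabla f(x)^\top (s - x) \;\ge\; \nabla f(x)^\top (x - x^*) + r\,\|\nabla f(x)\|.
\]
It then remains to discard the first term on the right. By convexity of $f$ we have $\nabla f(x)^\top (x - x^*) \ge f(x) - f^* \ge 0$ (this is exactly the chain already recorded in~\eqref{fw_gap}), so the term is nonnegative and can be dropped, leaving
\[
-\nabla f(x)^\top (s - x) \;\ge\; r\,\|\nabla f(x)\|,
\]
which is the claim.

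There is no real obstacle here: the only point requiring a moment's care is the degenerate case $\nabla f(x) = 0$, where the auxiliary point $y$ is undefined but the inequality holds with equality $0 \ge 0$, and the observation that the competitor $y$ genuinely lies in $Q$ thanks to the ball containment assumption. Everything else is the one-line optimality property of the LMO combined with the first-order convexity inequality.
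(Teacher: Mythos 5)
Your proof is correct and follows essentially the same route as the argument the paper relies on (it cites Braun et al., Proposition~2.16, rather than giving its own proof): compare $s$ against the competitor $x^* - r\,\nabla f(x)/\|\nabla f(x)\|$ in the ball, then drop the nonnegative term $\nabla f(x)^\top(x - x^*) \ge f(x) - f^* \ge 0$ exactly as in~\eqref{fw_gap}. Your handling of the degenerate case $\nabla f(x)=0$ and the membership of the competitor point in $Q$ is also fine.
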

	
	The proof of this result can be found in~\cite[Proposition~2.16]{braun2022conditional}.
	
	We are now ready to prove the linear convergence rate for convex objectives satisfying the PL condition.  
	Notably, this result does not require a lower bound on $\|\nabla f(x_k)\|$, in contrast to Theorem~\ref{th:linear_strong_conv_set}.
	
	\begin{theorem}\label{th:linear_conv_pl_cond}
		Assume that $f$ is a convex $(L_0, L_1)$-smooth function satisfying the PL condition~\ref{def:pl_cond} with constant $\mu > 0$.  
		If there exists $r > 0$ such that $B(x^*, r) \subseteq Q$ for the solution $x^*$, then Algorithm~\ref{alg:fw_l0l1} satisfies
		\[
		f(x_{k+1}) - f^*
		\leq (f(x_0) - f^*)
		\max\!\left\{\frac{1}{2},\, 1 - \frac{r}{4 e L_1 D^2}\right\}^{T}
		\max\!\left\{\frac{1}{2},\, 1 - \frac{r^2 \mu}{2 e L_0 D^2}\right\}^{K},
		\]
		where $T$ denotes the number of iterations for which $L_0 \leq L_1 \|\nabla f(x_k)\|$,  
		and $K$ denotes the number of iterations for which $L_0 > L_1 \|\nabla f(x_k)\|$.
	\end{theorem}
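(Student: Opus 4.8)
The plan is to mimic the proof of Theorem~\ref{th:linear_strong_conv_set} almost verbatim, replacing the scaling condition for strongly convex sets (Proposition~\ref{def:scaling_condition}) with the scaling condition for interior solutions (Proposition~\ref{prop:scaling_condition_for_convex_set}), and using the PL condition in place of the lower bound $\|\nabla f(x_k)\|\ge c$ when it is needed. I split into the two familiar cases.

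\textit{Case $\alpha_k<1$.} Starting from Lemma~\ref{lemma:descent_lemma} with the explicit step size, I get
\[
f(x_k)-f(x_{k+1})\ \ge\ \frac{1}{2e}\cdot\frac{(\nabla f(x_k)^\top d_k)^2}{a_k\|d_k\|^2},
\]
where $a_k=L_0+L_1\|\nabla f(x_k)\|$. Now I use $-\nabla f(x_k)^\top d_k\ge r\|\nabla f(x_k)\|$ from Proposition~\ref{prop:scaling_condition_for_convex_set} for one factor of $-\nabla f(x_k)^\top d_k$, and bound $\|d_k\|^2\le D^2$ in the denominator, giving
\[
f(x_k)-f(x_{k+1})\ \ge\ \frac{r}{2eD^2}\cdot\frac{\|\nabla f(x_k)\|}{a_k}\,(-\nabla f(x_k)^\top d_k).
\]
Then split on the subcases. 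If $L_0\le L_1\|\nabla f(x_k)\|$, then $a_k\le 2L_1\|\nabla f(x_k)\|$, so $\|\nabla f(x_k)\|/a_k\ge 1/(2L_1)$, and using the FW-gap inequality~\eqref{fw_gap} in the form $-\nabla f(x_k)^\top d_k\ge f(x_k)-f^*$ yields the factor $1-\frac{r}{4eL_1D^2}$. If instead $L_0>L_1\|\nabla f(x_k)\|$, then $a_k\le 2L_0$, so $\|\nabla f(x_k)\|/a_k\ge \|\nabla f(x_k)\|/(2L_0)$; now I need to convert the remaining $\|\nabla f(x_k)\|\cdot(-\nabla f(x_k)^\top d_k)$ into something proportional to $f(x_k)-f^*$. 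Using Proposition~\ref{prop:scaling_condition_for_convex_set} once more, $-\nabla f(x_k)^\top d_k\ge r\|\nabla f(x_k)\|$, so the product is at least $r\|\nabla f(x_k)\|^2\ge 2r\mu(f(x_k)-f^*)$ by the PL condition. Altogether this gives $f(x_k)-f(x_{k+1})\ge \frac{r^2\mu}{2eL_0D^2}(f(x_k)-f^*)$, i.e. the factor $1-\frac{r^2\mu}{2eL_0D^2}$.

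\textit{Case $\alpha_k=1$.} This is identical to Theorem~\ref{th:linear_strong_conv_set}: Lemma~\ref{lemma:descent_lemma} with $\alpha_k=1$ gives $f(x_{k+1})-f(x_k)\le \tfrac12\nabla f(x_k)^\top d_k$, and~\eqref{fw_gap} gives $\nabla f(x_k)^\top d_k\le -(f(x_k)-f^*)$, hence $f(x_{k+1})-f^*\le\tfrac12(f(x_k)-f^*)$. Multiplying all the per-iteration contractions over the $k$ iterations, grouping the $T$ iterations with $L_0\le L_1\|\nabla f(x_k)\|$ and the $K$ iterations with $L_0>L_1\|\nabla f(x_k)\|$, and taking the worse of the $\alpha_k<1$ factor and $\tfrac12$ in each group, produces the claimed bound.

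The only real subtlety I anticipate is the $L_0>L_1\|\nabla f(x_k)\|$ subcase: one must spend \emph{two} applications of the interior-point scaling inequality (one to peel off a factor $-\nabla f(x_k)^\top d_k\ge r\|\nabla f(x_k)\|$ directly, and one already used to produce the $\|\nabla f(x_k)\|/a_k$ structure) so that the surviving quantity is $\|\nabla f(x_k)\|^2$, to which the PL condition applies; this is where the $r^2$ and the $D^2$ both enter. Everything else is routine algebra paralleling the previous theorem.
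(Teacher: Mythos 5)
Your proof is correct and follows essentially the same route as the paper: the same descent lemma for the $\alpha_k<1$ case, the same two uses of Proposition~\ref{prop:scaling_condition_for_convex_set} (one application plus the FW gap in the $L_0\le L_1\|\nabla f(x_k)\|$ subcase, two applications plus the PL condition in the $L_0>L_1\|\nabla f(x_k)\|$ subcase), and the identical $\alpha_k=1$ argument. Incidentally, your assignment of the subcases to the two factors is the one consistent with the theorem statement, whereas the paper's proof body has the two subcase labels swapped (a typo), so your version is if anything the cleaner one.
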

	
	\begin{proof}
		As in Theorem~\ref{th:linear_strong_conv_set}, we consider two cases.
		
		\textit{Case 1:} Algorithm~\ref{alg:fw_l0l1} has $\alpha_k < 1$ at iteration $k$.  
		Starting from the descent inequality of Lemma~\ref{lemma:descent_lemma}, we have
		\begin{align*}
			f(x_k) - f(x_{k+1})
			&\geq \frac{1}{2e} \cdot 
			\frac{(\nabla f(x_k)^\top d_k)^2}{a_k \|d_k\|^2}
			\overset{\eqref{prop:scaling_condition_for_convex_set}}{\geq}
			\frac{r^2 \|\nabla f(x_k)\|^2}{2 e a_k \|d_k\|^2}.
		\end{align*}
		
		Recalling that $a_k = L_0 + L_1 \|\nabla f(x_k)\|$, we distinguish two subcases.
		
		When $L_0 > L_1 \|\nabla f(x_k)\|$, we obtain
		\begin{align*}
			f(x_k) - f(x_{k+1})
			&\geq \frac{r^2 \|\nabla f(x_k)\|^2}{4 e L_0 \|d_k\|^2}
			\overset{\eqref{def:pl_cond}}{\geq}
			\frac{r^2 \mu (f(x_k) - f^*)}{2 e L_0 \|d_k\|^2}
			\geq \frac{r^2 \mu (f(x_k) - f^*)}{2 e L_0 D^2}.
		\end{align*}
		
		After straightforward algebra, we obtain
		\begin{equation}\label{linear_rate_pl_alpha_less_1_case1}
			f(x_{k+1}) - f^*
			\leq (f(x_k) - f^*)
			\left( 1 - \frac{r^2 \mu}{2 e L_0 D^2} \right).
		\end{equation}
		
		In the remaining case, when $L_0 \leq L_1 \|\nabla f(x_k)\|$, we have
		\begin{equation*}
			\begin{split}
				f(x_k) - f(x_{k+1})
				&\overset{\eqref{prop:scaling_condition_for_convex_set}}{\geq}
				\frac{r\,(-\nabla f(x_k)^\top d_k)}{4 e L_1 \| d_k \|^2}
				\overset{\eqref{fw_gap}}{\ge}
				\frac{r (f(x_k) - f^*)}{4 e L_1 \| d_k \|^2}
				\geq \frac{r (f(x_k) - f^*)}{4 e L_1 D^2}.
			\end{split}
		\end{equation*}
		
		Consequently,
		\begin{equation}\label{linear_rate_pl_alpha_less_1_case2}
			f(x_{k+1}) - f^*
			\leq (f(x_k) - f^*)
			\left( 1 - \frac{r}{4 e L_1 D^2} \right).
		\end{equation}
		
		\textit{Case 2:} $\alpha_k = 1$.  
		Following the same reasoning as in Theorem~\ref{th:linear_strong_conv_set}, we use the descent inequality of Lemma~\ref{lemma:descent_lemma} with the appropriate step size:
		\[
		f(x_{k+1}) - f(x_k) \leq \frac{\nabla f(x_k)^\top d_k}{2} 
		\overset{\eqref{fw_gap}}{\leq} 
		-\frac{1}{2} \left(f(x_k) - f^*\right),
		\]
		which implies
		\[
		f(x_{k+1}) - f^* \leq \frac{1}{2} (f(x_k) - f^*).
		\]
		
		Combining these inequalities yields the claimed linear convergence rate.
	\end{proof}
	
	Finally, we compare this result with the classical Frank--Wolfe convergence estimate for $L$-smooth functions under the same setting:
	\[
	f(x_k) - f^\ast \le (f(x_0) - f^\ast)
	\max\!\left\{\frac{1}{2},\, 1-\frac{r^2\mu}{L D^2}\right\}^{\,k-1}.
	\]
	Once again, we observe an acceleration when $L \gg L_1$ and $L \gg L_0$.  
	However, we emphasize an additional distinction from the classical formulation: in our case, acceleration occurs not only due to the inequalities $L \gg L_1$ and $L \gg L_0$, but also due to the absence of the $\mu$ parameter from the PL condition~\ref{def:pl_cond} in the case $L_0 > L_1 \|\nabla f(x_k)\|$.  
	Since $\mu$ is typically small, the classical convergence estimate is often slower than the rate established for our proposed algorithm.
	
	\section{Adaptation Procedure for the $(L_0, L_1)$-FW Algorithm}
	
	The adaptive variant of Algorithm~\ref{alg:fw_l0l1} is presented in Listing~\ref{alg:fw_l0l1_adapt} (denoted as Adapt $(L_0, L_1)$-FW).
	
	As mentioned in the introduction, it employs a more flexible adaptation mechanism compared to standard approaches. In our method, the parameters $L_0$ and $L_1$ are adjusted individually, in proportion to their respective contributions to the total value $a_k = L_0^{(k)} + L_1^{(k)} \| \nabla f(x_k) \|$. Moreover, both parameters are divided simultaneously but multiplied sequentially, one after another. The corresponding adaptive mechanism is illustrated in lines~\ref{divide_parameter_l0},~\ref{divide_parameter_l1},~\ref{multiply_parameter_l0}, and~\ref{multiply_parameter_l1} of Algorithm~\ref{alg:fw_l0l1_adapt}.
	
	\begin{algorithm}
		\caption{Adaptive $(L_0, L_1)$-Frank-Wolfe Algorithm (Adapt $(L_0,L_1)$-FW)}
		\label{alg:fw_l0l1_adapt}
		\begin{algorithmic}[1]
			\State \textbf{Input:} initial parameters $L_0^{(0)} > 0$, $L_1^{(0)} > 0$, $L_0^{\max} \gg 0$, $L_1^{\max} \gg 0$, maximum number of iterations $N$, scaling factor $\rho > 2$
			\State toggle $\gets 0$
			\For{$k = 0, 1, \dots, N-1$}
			\State $s_k \gets \mathrm{LMO}(\nabla f(x_k))$
			\State $d_k \gets s_k - x_k$
			\State $a_k \gets L_0^{(k)} + L_1^{(k)} \|\nabla f(x_k)\|$
			
			\State $L_0^{(k)} \gets L_0^{(k)} / (\rho + L_0^{(k)} / a_k)$ \label{divide_parameter_l0}
			\State $L_1^{(k)} \gets L_1^{(k)} / (\rho + (L_1^{(k)} \|\nabla f(x_k)\|) / a_k)$ \label{divide_parameter_l1}
			
			\While{True}
			\State $a_k \gets L_0^{(k)} + L_1^{(k)} \|\nabla f(x_k)\|$
			\State $\displaystyle 
			\alpha(k) \gets \min \left\{ 1, 
			\frac{- \nabla f(x_k)^\top d_k}{a_k \|d_k\|^2 e} \right\}$
			\State $x_{k+1} \gets x_k + \alpha_k d_k$
			
			\If{$f(x_{k+1}) \le f(x_k) + \alpha(k) \nabla f(x_k)^\top d_k + 
				\frac{a_k e}{2} \alpha(k)^2 \|d_k\|^2$} \label{adapt_checks}
			\State \textbf{break}
			\Else
			\If{toggle $= 0$}
			\State $L_0^{(k)} \gets \min\{L_0^{(k)} \cdot (\rho - L_0^{(k)} / a_k), L_0^{\max}\}$ \label{multiply_parameter_l0}
			\State toggle $\gets 1$
			\Else
			\State $L_1^{(k)} \gets \min\{L_1^{(k)} \cdot (\rho - (L_1^{(k)} \|\nabla f(x_k)\|) / a_k), L_1^{\max}\}$ \label{multiply_parameter_l1}
			\State toggle $\gets 0$
			\EndIf
			\EndIf
			\EndWhile
			\EndFor
			\State \textbf{Output:} final point $x_N$
		\end{algorithmic}
	\end{algorithm}
	
	We now show that the adaptive procedure in Algorithm~\ref{alg:fw_l0l1_adapt} does not deteriorate the convergence rate, and that all convergence theorems preserve their corresponding estimates.  
	Our analysis follows the approach of~\cite{nesterov2013gradient} (Lemma~4), adapted to our setting.
	
	\begin{lemma}
		Assume that $f$ is an $(L_0,L_1)$-smooth function~\ref{def:l0_l1} with constants $L_0$ and $L_1$, and let $n_i$ denote the number of inequality checks in line~\ref{adapt_checks} of Algorithm~\ref{alg:fw_l0l1_adapt}.  
		Then, for an adaptation factor $\rho > 2$, the following bound holds:
		\begin{align}
			\sum_{i = 0}^{N} n_i  & \le 
			N \!\left(1 + \frac{\log(\rho + 1)}{\log(\rho - 1)}\right)
			+ \frac{1}{\log(\rho - 1)} 
			\log\!\frac{\min \{\rho L_0, L_0^{\max}\}  + \min \{\rho L_1, L_1^{\max}\} }{L_0^{(0)} + L_1^{(0)}} \notag \\[2pt]
			&= O(N).
		\end{align}
	\end{lemma}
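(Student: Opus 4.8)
The plan is to adapt the amortization argument of Nesterov~\cite{nesterov2013gradient} (Lemma~4) to the two-parameter setting. Throughout, let $\Lambda_k$ denote the running value of $L_0^{(k)} + L_1^{(k)}$, and let $m_i := n_i - 1$ be the number of \emph{failed} checks in line~\ref{adapt_checks} during the $i$-th outer iteration (equivalently, the number of parameter multiplications performed there); the last, successful check accounts for the remaining $+1$ in $n_i$. I would bound $m_i$ in each iteration by the number of multiplications needed to ``undo'' the division performed in lines~\ref{divide_parameter_l0}--\ref{divide_parameter_l1} plus the net multiplicative growth of $\Lambda$ during that iteration, and then telescope the growth over $i = 0, \dots, N$.

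First I would record two elementary one-step facts. In lines~\ref{divide_parameter_l0}--\ref{divide_parameter_l1} the parameters are divided by $\rho + L_0^{(k)}/a_k$ and $\rho + L_1^{(k)}\|\nabla f(x_k)\|/a_k$; since $L_0^{(k)}/a_k + L_1^{(k)}\|\nabla f(x_k)\|/a_k = 1$ with both summands in $[0,1]$, each divisor lies in $[\rho,\rho+1]$, so this step multiplies $\Lambda$ by a factor in $[\tfrac{1}{\rho+1},\tfrac{1}{\rho}]$. Likewise, in lines~\ref{multiply_parameter_l0}/\ref{multiply_parameter_l1} the touched parameter is multiplied by a factor $\rho - L_\bullet^{(k)}/a_k \in [\rho-1,\rho]$ (here $\rho>2$ is used to keep this exceeding $1$); hence a multiplication never decreases $\Lambda$, and, since the \texttt{toggle} flag forces consecutive multiplications to alternate between $L_0$ and $L_1$, every two successive multiplications raise $\Lambda$ by a factor of at least $\rho-1$ (if $L_0\to c_1 L_0$ and $L_1\to c_2 L_1$ with $c_1,c_2\ge\rho-1$, then $c_1 L_0 + c_2 L_1 \ge (\rho-1)(L_0+L_1)$). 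Combining with the division bound gives, with $\Lambda_{-1}:=L_0^{(0)}+L_1^{(0)}$,
\begin{equation*}
	\Lambda_{i} \ \ge\ \frac{(\rho-1)^{\lfloor m_i/2\rfloor}}{\rho+1}\,\Lambda_{i-1}.
\end{equation*}

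Next I would handle termination and the bound on $\Lambda_N$. The test in line~\ref{adapt_checks} is exactly inequality~\eqref{th:sublinear_conv_rate2:upper_bnd_only_e} written with the running parameters $L_0^{(k)},L_1^{(k)}$. By Lemma~\ref{lemma:upper_bnd}, and by repeating the short computation from the proof of Lemma~\ref{lemma:descent_lemma} that $\alpha(k)\|d_k\|\le \tfrac{1}{L_1^{(k)}e}$, the test is satisfied as soon as $L_0^{(k)}\ge L_0$ and $L_1^{(k)}\ge L_1$ (then $e^{L_1\alpha(k)\|d_k\|}\le e$ and $a_k \ge L_0 + L_1\|\nabla f(x_k)\|$); using the standing assumption that the caps satisfy $L_0^{\max}\gg L_0$, $L_1^{\max}\gg L_1$, this shows the inner loop always terminates, and that at the moment it exits each parameter exceeds its true constant by at most a factor $\rho$ (or else equals the cap), i.e.\ $\Lambda_N \le \min\{\rho L_0, L_0^{\max}\} + \min\{\rho L_1, L_1^{\max}\}$. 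Finally, taking logarithms in the displayed recursion, summing over $i=0,\dots,N$, rearranging for $\sum_i m_i$, substituting the bound on $\Lambda_N/\Lambda_{-1}$, and adding the $N{+}1$ successful checks yields $\sum_i n_i$ with leading term $N\bigl(1+\tfrac{\log(\rho+1)}{\log(\rho-1)}\bigr)$, hence $O(N)$ since the per-iteration overhead is a $\rho$-dependent constant.

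The main obstacle is the bookkeeping in the last step: the division acts on both parameters simultaneously while the corrective multiplications act on them one at a time, so the crude pairing above introduces a $\lfloor m_i/2\rfloor$ rather than $m_i$ and loses a constant factor relative to the clean per-iteration overhead $1+\log(\rho+1)/\log(\rho-1)$. Removing this loss requires exploiting the precise scaling factors $\rho \pm L_\bullet^{(k)}/a_k$ — which are closest to $\rho$ exactly for whichever parameter currently contributes least to $a_k$ — so that the growth of $\Lambda$ ``wasted'' on the small parameter in one multiplication is recouped by the following multiplication on the other parameter; this amortization across each alternating pair, rather than a uniform per-step estimate, is the delicate point. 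A secondary point to verify carefully is that activation of the caps $L_0^{\max},L_1^{\max}$ never prevents the exit condition from eventually being met.
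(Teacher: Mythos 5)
Your overall strategy is the same as the paper's: track the sum $\Lambda_k = L_0^{(k)}+L_1^{(k)}$, lower bound its multiplicative change over one outer iteration (the division step shrinks it by a factor in $[\rho,\rho+1]$ because $p_0+p_1=1$, the corrective multiplications grow it), telescope over $i=0,\dots,N$, and cap the final value using the fact that the check in line~\ref{adapt_checks} must pass once $L_0^{(k)}\ge L_0$ and $L_1^{(k)}\ge L_1$. All of those ingredients match the paper's proof. The genuine gap is the one you flag yourself: because the toggled updates in lines~\ref{multiply_parameter_l0}/\ref{multiply_parameter_l1} touch only one parameter per failed check, your per-iteration estimate is $\Lambda_i \ge (\rho-1)^{\lfloor m_i/2\rfloor}\Lambda_{i-1}/(\rho+1)$, and after telescoping this gives $\sum_i n_i \lesssim N\bigl(1+\tfrac{2\log(\rho+1)}{\log(\rho-1)}\bigr)$ plus twice the logarithmic remainder --- i.e.\ the $O(N)$ conclusion but not the displayed constant. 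Moreover, the amortization you hope will remove the factor $2$ cannot work: in the regime $p_0\approx 0$ the pair of updates $L_0\mapsto(\rho-p_0)L_0$, $L_1\mapsto(\rho-p_1')L_1$ increases $L_0^{(k)}+L_1^{(k)}\|\nabla f(x_k)\|$ by a factor only about $\rho-1$ (the large factor $\rho-p_0$ acts on a negligible summand, and the second factor is still $\approx\rho-1$), so the worst-case growth per failed check is of order $\sqrt{\rho-1}$, not $\rho-1$. Hence the exact inequality in the statement is not reachable along this route.

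It is worth knowing that the paper obtains the stated constant only by writing the per-iteration update as if all $n_i$ multiplication factors were applied to \emph{both} $L_0^{(i-1)}$ and $L_1^{(i-1)}$ simultaneously, which is how the factor $(\rho-1)^{n_i-1}$ per iteration appears; that accounting does not match the alternating updates of Algorithm~\ref{alg:fw_l0l1_adapt}, so your more careful bookkeeping is the correct one and the residual discrepancy concerns the paper's constant rather than your reasoning. Two smaller cautions: the terminal bound $\Lambda_N\le \min\{\rho L_0,L_0^{\max}\}+\min\{\rho L_1,L_1^{\max}\}$ (used by both you and the paper) also needs care, since the alternation can keep multiplying a parameter that already exceeds its true constant while the other one is still catching up, so without an extra argument one only gets the cap-based bound $L_0^{\max}+L_1^{\max}$ --- this affects only the additive logarithmic term, not $O(N)$; and your sufficient condition for passing the check (via $\alpha(k)\|d_k\|\le 1/L_1^{(k)}\le 1/L_1$ and $a_k\ge L_0+L_1\|\nabla f(x_k)\|$) is fine and mirrors the paper's implicit use of Lemma~\ref{lemma:upper_bnd}. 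In short: your proposal proves the $O(N)$ claim with an explicit but roughly doubled constant; the precise displayed bound is not established by your argument, and as written it is not justified by the paper's either without modifying either the algorithm's update pattern or the lemma's constant.
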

	
	\begin{proof}
		Let $a_k = L_0^{(k)} + L_1^{(k)} \|\nabla f(x_k)\|$, $p_0 = L_0^{(k)} / a_k$, and $p_1 = (L_1^{(k)} \|\nabla f(x_k)\|) / a_k$.  
		From Algorithm~\ref{alg:fw_l0l1_adapt}, during iteration $i$ we have
		\[
		L_0^{(i)} + L_1^{(i)} 
		\ge 
		\frac{(\rho - 1)^{n_i - 1}}{\rho + 1}
		\bigl(L_0^{(i-1)} + L_1^{(i-1)}\bigr).
		\]
		Taking natural logarithms yields
		\[
		n_i 
		\le 
		1 
		+ \frac{\log(\rho + 1)}{\log(\rho - 1)}
		+ \frac{1}{\log(\rho - 1)}
		\log\frac{L_0^{(i)} + L_1^{(i)}}{L_0^{(i-1)} + L_1^{(i-1)}}.
		\]
		Summing over $i = 0, \dots, N$
		\[
		\begin{aligned}
			\sum_{i=0}^{N} n_i 
			&\le 
			N 
			+ N \frac{\log(\rho + 1)}{\log(\rho - 1)}
			+ \frac{1}{\log(\rho - 1)}
			\log\frac{L_0^{(N)} + L_1^{(N)}}{L_0^{(0)} + L_1^{(0)}} 
			\\
			&\leq 
			N \!\left(1 + \frac{\log(\rho + 1)}{\log(\rho - 1)}\right)
			+ \frac{1}{\log(\rho - 1)} 
			\log\!\frac{\min \{\rho L_0, L_0^{\max}\}  + \min \{\rho L_1, L_1^{\max}\} }{L_0^{(0)} + L_1^{(0)}}
		\end{aligned}
		\]
		gives the claimed $O(N)$ bound.
	\end{proof}
	
	\begin{lemma}[Descent lemma for adaptive parameters]\label{lemma:descent_lemma_adapt}
		Let $f$ be a convex function satisfying $(L_0,L_1)$-smoothness~\ref{def:l0_l1}.  
		Then, for Algorithm~\ref{alg:fw_l0l1_adapt} and for each iteration $k \ge 1$, we have
		\begin{equation*}
			f(x_{k+1}) - f(x_k) \leq \frac{\nabla f(x_k)^\top d_k}{2} \cdot \min \left\{ 1,\;
			\frac{-\, \nabla f(x_k)^\top d_k}{\left(L_0^{(k)} + L_1^{(k)} \|\nabla f(x_k)\|\right) \|d_k\|^2 e} \right\}.
		\end{equation*}
	\end{lemma}
	
	\begin{proof}
		This follows directly from the fact that, at each iteration, the parameters $L_0^{(k)}$ and $L_1^{(k)}$ are chosen to satisfy inequality~\eqref{th:sublinear_conv_rate2:upper_bnd}.
	\end{proof}
	
	\begin{corollary}
		If $f$ satisfies all assumptions of Theorem~\ref{th:linear_strong_conv_set}, then Algorithm~\ref{alg:fw_l0l1_adapt} satisfies
		\begin{equation*}
			f(x_{k+1}) - f^*
			\leq (f(x_0) - f^*) 
			\max\left\lbrace {\frac{1}{2}}, 1 - \frac{\lambda}{2 e L_1^{\max}} \right\rbrace ^T
			\max\left\lbrace \frac{1}{2}, 1 - \frac{\lambda c}{2 e L_0^{\max}} \right\rbrace^K,
		\end{equation*}
		where $T$ and $K$ are defined as in Theorem~\ref{th:linear_strong_conv_set}, and $L_j^{\max} = \max_{i = 0, \dots, k} L_j^{(i)}$.
	\end{corollary}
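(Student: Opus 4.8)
The plan is to reuse the proof of Theorem~\ref{th:linear_strong_conv_set} almost line for line, with two modifications: replace the descent inequality of Lemma~\ref{lemma:descent_lemma} by its adaptive counterpart, Lemma~\ref{lemma:descent_lemma_adapt}, and read $a_k$ as $a_k = L_0^{(k)} + L_1^{(k)}\|\nabla f(x_k)\|$, the value built from the adaptive parameters actually used at iteration~$k$. The two auxiliary facts used in Theorem~\ref{th:linear_strong_conv_set} — the scaling condition for strongly convex sets (Proposition~\ref{def:scaling_condition}) and the Frank--Wolfe gap inequality~\eqref{fw_gap} — do not involve smoothness constants at all, so they carry over verbatim. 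As in Theorem~\ref{th:linear_strong_conv_set}, I would split the analysis into the cases $\alpha_k < 1$ and $\alpha_k = 1$.

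For $\alpha_k < 1$, substituting the step size into Lemma~\ref{lemma:descent_lemma_adapt} and applying Proposition~\ref{def:scaling_condition} and~\eqref{fw_gap} exactly as before produces
\[
f(x_{k+1}) - f^* \le (f(x_k) - f^*)\left(1 - \frac{\lambda}{e}\cdot\frac{\|\nabla f(x_k)\|}{L_0^{(k)} + L_1^{(k)}\|\nabla f(x_k)\|}\right).
\]
The next step is the same sub-dichotomy as in Theorem~\ref{th:linear_strong_conv_set}, but performed on the current adaptive parameters: if $L_0^{(k)} \le L_1^{(k)}\|\nabla f(x_k)\|$ then $a_k \le 2 L_1^{(k)}\|\nabla f(x_k)\|$ and hence $\|\nabla f(x_k)\|/a_k \ge 1/(2L_1^{(k)}) \ge 1/(2L_1^{\max})$; if $L_0^{(k)} > L_1^{(k)}\|\nabla f(x_k)\|$ then $a_k < 2L_0^{(k)}$, and together with $\|\nabla f(x_k)\| \ge c$ and $L_0^{(k)} \le L_0^{\max}$ this gives $\|\nabla f(x_k)\|/a_k \ge c/(2L_0^{(k)}) \ge c/(2L_0^{\max})$. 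In either sub-case the per-iteration contraction factor is bounded by the corresponding quantity in the statement. The case $\alpha_k = 1$ is unchanged from Theorem~\ref{th:linear_strong_conv_set}: Lemma~\ref{lemma:descent_lemma_adapt} with $\alpha_k = 1$ and~\eqref{fw_gap} give the factor $1/2$. Multiplying the contraction factors over the $T$ iterations of the first sub-case and the $K$ iterations of the second, and bounding the $\alpha_k = 1$ iterations by $1/2$, telescopes to the claimed estimate.

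I expect the only delicate point to be the bookkeeping of the case split. The dichotomy that governs the step size is the one between $L_0^{(k)}$ and $L_1^{(k)}\|\nabla f(x_k)\|$, i.e., between the \emph{adaptive} parameters, so $T$ and $K$ should be counted with respect to those quantities rather than the unknown ``true'' constants; this should be stated explicitly (the corollary does so implicitly through $L_j^{\max}$). It also has to be checked that each replacement $L_j^{(k)} \mapsto L_j^{\max}$ moves the bound in the admissible direction — enlarging the denominator $a_k$ only weakens the lower bound on $f(x_k) - f(x_{k+1})$, i.e., only pushes the contraction factor toward~$1$ — which is exactly what is needed, so nothing is lost beyond replacing the (unknown) optimal constants by their running maxima $L_j^{\max} = \max_{i=0,\dots,k} L_j^{(i)}$. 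Finally, one should recall that the regime $L_0^{(k)} > L_1^{(k)}\|\nabla f(x_k)\|$ relies on the standing assumption $\|\nabla f(x_k)\| \ge c$ inherited from Theorem~\ref{th:linear_strong_conv_set}.
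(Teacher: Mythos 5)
Your proposal matches the paper's own proof, which simply reuses the argument of Theorem~\ref{th:linear_strong_conv_set} with Lemma~\ref{lemma:descent_lemma_adapt} in place of Lemma~\ref{lemma:descent_lemma} and the bound $L_j^{(k)} \le L_j^{\max}$; your explicit check that enlarging $a_k$ to the running maxima only weakens the contraction in the admissible direction, and your remark that the $T$/$K$ case split is naturally taken with respect to the adaptive parameters $L_0^{(k)}, L_1^{(k)}\|\nabla f(x_k)\|$, are the same (implicit) steps the paper relies on, just spelled out. No gap.
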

	
	\begin{corollary}
		If $f$ satisfies all assumptions of Theorem~\ref{th:linear_conv_pl_cond}, then Algorithm~\ref{alg:fw_l0l1_adapt} satisfies
		\begin{equation*}
			f(x_{k+1}) - f^*
			\leq (f(x_0) - f^*)
			\max\left\lbrace \frac{1}{2}, 1 - \frac{r}{4 e L_1^{\max} D^2} \right\rbrace ^T
			\max\left\lbrace \frac{1}{2}, 1 - \frac{r^2 \mu}{2 e L_0^{\max} D^2} \right\rbrace ^K,
		\end{equation*}
		where $T$, $K$, and $L_j^{\max}$ are defined as above.
	\end{corollary}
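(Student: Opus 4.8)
The plan is to replay the proof of Theorem~\ref{th:linear_conv_pl_cond} essentially verbatim, making only two substitutions: the descent estimate is now supplied by Lemma~\ref{lemma:descent_lemma_adapt} (which holds for the per-iteration parameters $L_0^{(k)},L_1^{(k)}$ accepted in line~\ref{adapt_checks}) rather than by Lemma~\ref{lemma:descent_lemma}, and wherever a fixed constant $L_0$ or $L_1$ sat in a denominator it is enlarged to $L_0^{\max}$ or $L_1^{\max}$ via the trivial monotone bound $L_j^{(k)}\le L_j^{\max}$.

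Concretely, I would first dispose of the step $\alpha_k=1$: Lemma~\ref{lemma:descent_lemma_adapt} gives $f(x_{k+1})-f(x_k)\le\tfrac12\nabla f(x_k)^\top d_k$, and the Frank--Wolfe gap inequality~\eqref{fw_gap} turns this into $f(x_{k+1})-f^*\le\tfrac12(f(x_k)-f^*)$, exactly as before. For $\alpha_k<1$ I would start from
\[
f(x_k)-f(x_{k+1})\ \ge\ \frac{1}{2e}\cdot\frac{(\nabla f(x_k)^\top d_k)^2}{a_k\|d_k\|^2},\qquad a_k=L_0^{(k)}+L_1^{(k)}\|\nabla f(x_k)\|,
\]
apply the interior scaling condition (Proposition~\ref{prop:scaling_condition_for_convex_set}), use $\|d_k\|\le D$, and split according to which term dominates $a_k$. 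If $L_1^{(k)}\|\nabla f(x_k)\|$ dominates, then $a_k\le 2L_1^{\max}\|\nabla f(x_k)\|$, one factor $\|\nabla f(x_k)\|$ cancels against the scaling bound, \eqref{fw_gap} replaces $-\nabla f(x_k)^\top d_k$ by $f(x_k)-f^*$, and the contraction factor $1-\tfrac{r}{4eL_1^{\max}D^2}$ emerges; if $L_0^{(k)}$ dominates, then $a_k\le 2L_0^{\max}$, the PL inequality (Definition~\ref{def:pl_cond}) bounds $\|\nabla f(x_k)\|^2$ below by $2\mu(f(x_k)-f^*)$, and the factor $1-\tfrac{r^2\mu}{2eL_0^{\max}D^2}$ appears. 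Collecting the per-step contractions over the index sets of size $T$ and $K$ and multiplying through gives the stated estimate.

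The only point needing a word of care is the alignment of this subcase partition with the definitions of $T$ and $K$: the argument naturally splits on $L_0^{(k)}$ versus $L_1^{(k)}\|\nabla f(x_k)\|$, but since $L_j^{(k)}\le L_j^{\max}$ in \emph{either} regime, each of the two asserted per-step factors remains valid on its index set irrespective of the precise split, so the final product is unaffected. I do not anticipate any genuine obstacle here; once Lemma~\ref{lemma:descent_lemma_adapt} is in hand the corollary is pure bookkeeping — literally the proof of Theorem~\ref{th:linear_conv_pl_cond} with $L_j\mapsto L_j^{\max}$.
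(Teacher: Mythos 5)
Your proposal is correct and takes essentially the same route as the paper, whose entire proof of this corollary is that the argument of Theorem~\ref{th:linear_conv_pl_cond} is repeated verbatim with Lemma~\ref{lemma:descent_lemma_adapt} in place of Lemma~\ref{lemma:descent_lemma} and with the bound $L_j^{(k)} \leq L_j^{\max}$. The case-split alignment issue you flag (splitting on $L_0^{(k)}$ versus $L_1^{(k)}\|\nabla f(x_k)\|$ rather than on the true constants defining $T$ and $K$) is passed over silently by the paper, so your treatment is, if anything, slightly more explicit on that point.
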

	
	\section{Convergence rate for the general convex case}
	
	In this section we prove the suboptimal convergence rate of Algorithm~\ref{alg:fw_l0l1} for the general convex case.
	
	\begin{lemma}\label{lemma:subopt_rate}
		Assume that for a sequence $0 < h_0, \ldots, h_N$ and some $K_{\text{max}} > 0$ we have
		\begin{equation*}
			h_{k+1} \leq h_k \left( 1 - \frac{h_k}{K_{\text{max}}} \right).
		\end{equation*}
		Then for $k \geq 0 \in \mathbb{Z}$
		\begin{equation*}
			h_{k+1} \leq \frac{K_{\text{max}}}{k + 3}.
		\end{equation*}
	\end{lemma}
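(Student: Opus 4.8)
The plan is to invert the recursion and telescope, which is the standard device for turning a bound of the form $h_{k+1} \le h_k(1 - h_k/K_{\text{max}})$ into an $O(1/k)$ rate. The one place that needs attention is that the hypothesis provides no a priori control on the size of $h_0$ relative to $K_{\text{max}}$, so a plain telescoping would only produce $h_{k+1} \le K_{\text{max}}/(k+2)$; the missing unit in the denominator has to be recovered from the fact that a single step of the iteration already forces $h_1$ to be small.

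First I would record that $h_k < K_{\text{max}}$ at every index where the recursion is applied: since $0 < h_{k+1} \le h_k(1 - h_k/K_{\text{max}})$ with $h_k > 0$, the factor $1 - h_k/K_{\text{max}}$ must be strictly positive. Next, completing the square gives $t(1 - t/K_{\text{max}}) = \frac{K_{\text{max}}}{4} - \frac{1}{K_{\text{max}}}\bigl(t - \frac{K_{\text{max}}}{2}\bigr)^2 \le \frac{K_{\text{max}}}{4}$ for every real $t$; applying this with $t = h_0$ yields $h_1 \le K_{\text{max}}/4$, equivalently $1/h_1 \ge 4/K_{\text{max}}$.

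Then, using $1 - h_k/K_{\text{max}} \in (0,1)$ together with the elementary estimate $\frac{1}{1-x} \ge 1+x$ valid for $x \in [0,1)$, I would pass to reciprocals in the recursion:
\[
\frac{1}{h_{k+1}} \;\ge\; \frac{1}{h_k\bigl(1 - h_k/K_{\text{max}}\bigr)} \;\ge\; \frac{1}{h_k}\Bigl(1 + \frac{h_k}{K_{\text{max}}}\Bigr) \;=\; \frac{1}{h_k} + \frac{1}{K_{\text{max}}}.
\]
Starting from $1/h_1 \ge 4/K_{\text{max}} \ge 3/K_{\text{max}}$ and iterating this inequality, an easy induction shows $1/h_k \ge (k+2)/K_{\text{max}}$ for all $k \ge 1$; rewriting the index gives $1/h_{k+1} \ge (k+3)/K_{\text{max}}$, that is $h_{k+1} \le K_{\text{max}}/(k+3)$, for all $k \ge 0$.

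The hard part, such as it is, is conceptual rather than computational: one must not telescope from $h_0$ directly but instead first spend one iteration to obtain the free bound $h_1 \le K_{\text{max}}/4$, which is precisely what upgrades the denominator from $k+2$ to $k+3$. The remaining steps — the reciprocal inequality and the induction — are routine.
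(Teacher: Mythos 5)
Your proof is correct, and it follows a genuinely different route from the paper's. Both arguments hinge on the same initial observation that one step of the recursion already forces $h_1 \le K_{\text{max}}/4$ (by maximizing $t(1-t/K_{\text{max}})$ over $t$), but from there the paper proceeds by direct induction on the claimed bound, splitting into the cases $h_{k+1} \le K_{\text{max}}/(k+4)$ and $h_{k+1} > K_{\text{max}}/(k+4)$ and using the monotone decrease of the factor $1 - h_{k+1}/K_{\text{max}}$, whereas you invert the recursion and telescope: from $1 - h_k/K_{\text{max}} \in (0,1)$ (correctly deduced from positivity of $h_{k+1}$) and the elementary bound $\frac{1}{1-x} \ge 1+x$ you obtain $\frac{1}{h_{k+1}} \ge \frac{1}{h_k} + \frac{1}{K_{\text{max}}}$, and summing from $\frac{1}{h_1} \ge \frac{4}{K_{\text{max}}}$ gives the result. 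Your reciprocal argument avoids the case analysis entirely and, had you kept $4/K_{\text{max}}$ rather than relaxing it to $3/K_{\text{max}}$, it would even deliver the marginally stronger bound $h_{k+1} \le K_{\text{max}}/(k+4)$; the paper's induction, on the other hand, stays in the original variables and requires no passage to reciprocals. Your side remark that plain telescoping from $h_0$ only yields $K_{\text{max}}/(k+2)$ implicitly uses $h_0 < K_{\text{max}}$, which you justify, and in any case does not affect the validity of the proof.
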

	
	\begin{proof}
		The proof proceeds by induction. For $k = 0$, we have
		\begin{equation*}
			h_1 \leq h_0 \left( 1 - \frac{h_0}{K_{\text{max}}} \right)
			\leq \frac{K_{\text{max}}}{4} \leq \frac{K_{\text{max}}}{3} = \frac{K_{\text{max}}}{0 + 3},
		\end{equation*}
		where the inequality follows from the upper bound of the function $f(x) = x - \frac{x^2}{K_{\text{max}}}$ for $x \geq 0$.
		
		Assume the statement holds for some $k$, and let us prove
		\begin{equation*}
			h_{k+2} \leq \frac{K_{\text{max}}}{k + 4}.
		\end{equation*}
		
		We consider two cases.  
		If $h_{k+1} \leq \frac{K_{\text{max}}}{k + 4}$, the inequality follows immediately.  
		Otherwise, if $h_{k+1} > \frac{K_{\text{max}}}{k + 4}$, then
		\begin{equation*}
			h_{k+2} \leq h_{k+1} \frac{k + 3}{k + 4} \leq \frac{K_{\text{max}}}{k + 3} \frac{k + 3}{k + 4} = \frac{K_{\text{max}}}{k + 4},
		\end{equation*}
		where the second inequality follows from the induction hypothesis.
	\end{proof}
	
	\begin{theorem}\label{th:sublinear_conv_rate2}
		Assume that $f$ is a convex function satisfying the $(L_0,L_1)$-smoothness condition~\ref{def:l0_l1}. Then, for Algorithm~\ref{alg:fw_l0l1}, for each iteration $k \geq 1$, we have
		\begin{equation*}
			f(x^k) - f(x^*) \leq \frac{2eD^2 \left( L_0 + L_1 \max_k \| \nabla f(x_k) \| \right)}{k + 3},
		\end{equation*}
		where $D = \max_{x,y \in Q} \|x - y\|$.
	\end{theorem}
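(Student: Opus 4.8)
The plan is to reduce the statement to the scalar recursion established in Lemma~\ref{lemma:subopt_rate}. Write $h_k := f(x_k) - f^*$, $g_k := -\nabla f(x_k)^\top d_k$, and set $\bar a := L_0 + L_1 \sup_k \|\nabla f(x_k)\|$, so that $K_{\max} := 2eD^2\bar a$ is exactly the numerator of the target bound. By the Frank--Wolfe gap inequality~\eqref{fw_gap}, $g_k = G(x_k) \ge h_k > 0$. Since $\|d_k\| \le D$ and $a_k := L_0 + L_1\|\nabla f(x_k)\| \le \bar a$, we have $a_k e\|d_k\|^2 \le \bar a e D^2 = K_{\max}/2$, hence the step size of Algorithm~\ref{alg:fw_l0l1} obeys $\alpha_k \ge \min\{1,\ 2 g_k / K_{\max}\}$. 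Feeding this into the descent inequality of Lemma~\ref{lemma:descent_lemma}, namely $h_k - h_{k+1} \ge \tfrac12 g_k \alpha_k$, and using $g_k \ge h_k$ together with the monotonicity in $g_k$ of the right-hand side, one arrives at $h_k - h_{k+1} \ge \min\{h_k/2,\ h_k^2/K_{\max}\}$.

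Next I would cast this in the form $h_{k+1} \le h_k(1 - h_k/K_{\max})$ required by Lemma~\ref{lemma:subopt_rate}, splitting on whether $h_k \le K_{\max}/2$. If it is, the minimum above equals $h_k^2/K_{\max}$ and the recursion holds verbatim. If instead $h_k > K_{\max}/2$, then $g_k \ge h_k > \bar a e D^2 \ge a_k e\|d_k\|^2$, so the step clamps to $\alpha_k = 1$, and Lemma~\ref{lemma:descent_lemma} together with~\eqref{fw_gap} gives only $h_{k+1} \le h_k/2$; one must then show that this transient regime lasts only finitely many halvings, after which $h_k \le K_{\max}/2$ and the recursion becomes available (equivalently, $\alpha_k = 1$ forces, via Cauchy--Schwarz $g_k \le \|\nabla f(x_k)\|\|d_k\|$ and $a_k \ge L_1\|\nabla f(x_k)\|$, the bound $\|d_k\| \le 1/(eL_1)$, so that $h_k \le g_k \le \|\nabla f(x_k)\|\|d_k\|$ is controlled against $K_{\max}/2 = eD^2\bar a$). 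Once $h_{k+1} \le h_k(1 - h_k/K_{\max})$ holds for every $k$, Lemma~\ref{lemma:subopt_rate} with this $K_{\max}$ yields $f(x^k) - f^* \le \dfrac{2eD^2\bar a}{k+3} = \dfrac{2eD^2\bigl(L_0 + L_1 \sup_k\|\nabla f(x_k)\|\bigr)}{k+3}$, which is the assertion.

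The main obstacle is precisely the clamped-step case $\alpha_k = 1$. Unlike the classical $L$-smooth analysis, where one bypasses it by comparing the line-search iterate with the one generated by the predetermined step $\gamma_k = 2/(k+2)$, the adaptive short-step rule~\eqref{shortest_step_def_l0l1} yields only a factor-$\tfrac12$ decrease in the large-gap regime, so turning it into $h_{k+1} \le h_k(1-h_k/K_{\max})$ genuinely requires the a priori estimate $h_k \le K_{\max}/2$. I expect this to be dispatched through the diameter bound on $\|d_k\|$ noted above, ensuring that the iterates enter and never leave the region where the recursion is valid; after that, the proof is the routine scalar induction underlying Lemma~\ref{lemma:subopt_rate}.
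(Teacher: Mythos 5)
Your reduction to Lemma~\ref{lemma:subopt_rate} via Lemma~\ref{lemma:descent_lemma} and \eqref{fw_gap} is exactly the paper's route, and your handling of the case $h_k\le K_{\max}/2$ (where the recursion $h_{k+1}\le h_k(1-h_k/K_{\max})$ does hold) is fine. The genuine gap is the clamped regime you flag yourself, and neither of your proposed repairs closes it. The parenthetical Cauchy--Schwarz argument does give $\|d_k\|\le 1/(eL_1)$ when $\alpha_k=1$, hence $h_k\le g_k\le\|\nabla f(x_k)\|\,\|d_k\|\le \sup_k\|\nabla f(x_k)\|/(eL_1)$; but to conclude from this that $h_k\le K_{\max}/2=eD^2\bigl(L_0+L_1\sup_k\|\nabla f(x_k)\|\bigr)$ you would need something like $e^2L_1^2D^2\ge 1$, which is not implied by the hypotheses, so the regime $h_k>K_{\max}/2$ is not excluded this way. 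The fallback ``finitely many halvings'' argument is also insufficient as stated: the halvings do terminate, but then Lemma~\ref{lemma:subopt_rate} can only be invoked from the first index $k_0$ with $h_{k_0}\le K_{\max}/2$, which yields $h_{k+1}\le K_{\max}/(k-k_0+3)$, a time-shifted rate rather than the claimed $K_{\max}/(k+3)$; you never convert this back.

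What is missing is the extra estimate the paper extracts from the full step. When $\alpha_k=1$, substituting $\alpha_k=1$ into \eqref{th:sublinear_conv_rate2:upper_bnd_only_e} and using $\nabla f(x_k)^\top(s_k-x_k)\le\nabla f(x_k)^\top(x^*-x_k)\le f^*-f(x_k)$ (definition of the LMO plus convexity) gives $h_{k+1}\le \tfrac{e a_k}{2}\|d_k\|^2\le K_{\max}/4$, independently of how large $h_k$ is; together with monotonicity of $h_k$ this confines the problematic regime to the very first iteration at worst. The paper then simply folds the clamped case into the induction rather than forcing the recursion: the base case is $h_1\le K_{\max}/4$, at a clamped step the halving plus the induction hypothesis give $h_{k+1}\le \tfrac12 h_k\le \tfrac{K_{\max}}{2(k+2)}\le\tfrac{K_{\max}}{k+3}$, and the unclamped steps are exactly the induction step of Lemma~\ref{lemma:subopt_rate}. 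With that one additional inequality your argument closes; without it, it does not.
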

	
	\begin{proof}
		We use the descent inequality from Lemma~\ref{lemma:descent_lemma}:
		\begin{equation}\label{descent_ineq_sublinear}
			f(x_{k+1}) - f(x_k)
			\leq
			\frac{\nabla f(x_k)^\top d_k}{2}
			\cdot
			\min \left\{
			1,\;
			\frac{-\, \nabla f(x_k)^\top d_k}{
				\left(L_0 + L_1 \|\nabla f(x_k)\|\right)
				\|d_k\|^2 e}
			\right\}.
		\end{equation}
		
		We consider two cases.
		
		\medskip
		\noindent
		\textbf{Case 1:} $\boldsymbol{\alpha(k) = 1.}$  
		From~\eqref{descent_ineq_sublinear}, we have
		\begin{equation*}
			f(x_{k+1}) - f(x_k)
			\leq
			\frac{1}{2} \nabla f(x_k)^\top d_k
			\overset{\eqref{fw_gap}}{\leq}
			\frac{1}{2} (f^* - f(x_k)),
		\end{equation*}
		and hence
		\begin{equation*}
			f(x_{k+1}) - f^* \leq \frac{1}{2} (f(x_k) - f^*).
		\end{equation*}
		
		Subtracting $f^*$ from both sides of
		\eqref{th:sublinear_conv_rate2:upper_bnd_only_e} with $\alpha(k)=1$ yields
		\begin{equation*}
			\begin{split}
				f(x_{k+1}) - f^*
				&\leq -f^* + f(x_k) + \nabla f(x_k)^\top d_k
				+ \frac{a_k}{2} \|d_k\|^2 e \\
				&\overset{\eqref{fw_step_size}}{\leq}
				\underbrace{-f^* + f(x_k)
					+ \nabla f(x_k)^\top (x^* - x_k)}_{\leq 0 \text{ (by convexity)}}
				+ \frac{a_k}{2} \|d_k\|^2 e
				\leq \frac{a_k}{2} \|d_k\|^2 e.
			\end{split}
		\end{equation*}
		
		Thus, for $\alpha(k) = 1$, we obtain
		\begin{equation}\label{alpha_1_case_desc_ineq}
			f(x_{k+1}) - f^* \leq
			\min \left\{
			\frac{a_k}{2} \|d_k\|^2 e,\;
			\frac{1}{2}(f(x_k) - f^*)
			\right\}.
		\end{equation}
		
		\medskip
		\noindent
		\textbf{Case 2:} $\boldsymbol{\alpha(k) < 1.}$  
		Substituting the step size~\eqref{step_size_def} into
		\eqref{descent_ineq_sublinear} yields
		\begin{equation}\label{descent_ineq_alg_2_step_size_less_1}
			f(x_{k+1}) - f(x_k)
			\leq
			\frac{1}{2}
			\frac{(\nabla f(x_k)^\top d_k)^2}{
				a_k \|d_k\|^2 e}.
		\end{equation}
		
		We now establish the convergence rate by induction.
		For $k=0$ and $\alpha(0)=1$, from~\eqref{alpha_1_case_desc_ineq} we have
		\[
		f(x_{1}) - f^*
		\leq \frac{a_0}{2} \|d_0\|^2 e
		\leq \frac{2 \max \{a_0, a_1\} D^2 e}{4}.
		\]
		For arbitrary $k + 1$, the desired inequality follows directly from the induction hypothesis and the right-hand side of~\eqref{alpha_1_case_desc_ineq}.
		
		If $\alpha(k) < 1$, we slightly reformulate~\eqref{descent_ineq_alg_2}:
		\[
		f(x_{k+1}) - f^*
		\leq (f(x_k) - f^*)
		- \frac{1}{2}
		\frac{(\nabla f(x_k)^\top d_k)^2}{
			a_k \|d_k\|^2 e}
		\overset{\eqref{fw_gap}}{\leq}
		(f(x_k) - f^*)
		\left(
		1 - \frac{\nabla f(x_k)^\top d_k}{
			2 a_k \|d_k\|^2 e}
		\right).
		\]
		
		Denoting $h_k = f(x_k) - f^*$ and $K = 2 a_k \|d_k\|^2 e$ (note that $h_k \leq K$ by~\eqref{descent_ineq_sublinear} and the convexity of $f$), and applying Lemma~\ref{lemma:subopt_rate}, we obtain
		\[
		f(x_{k+1}) - f^* \leq
		\frac{2eD^2 \left( L_0 + L_1 \max_k \|\nabla f(x_k)\| \right)}{k + 3}.
		\]
	\end{proof}
	
	It is well-known that the $O(1/k)$ rate is optimal for the Frank--Wolfe method~\cite{polyak1983introduction}.  
	Hence, asymptotically, our estimate matches the standard Frank--Wolfe rate.

	\section{Numerical Experiments}\label{sec_numerical_experiments}
	
	In this section, we present numerical experiments illustrating the performance of the proposed Algorithm~\ref{alg:fw_l0l1} ($(L_0,L_1)$-FW) and its adaptive variant, Algorithm~\ref{alg:fw_l0l1_adapt} (Adapt $(L_0,L_1)$-FW).  
	The aim of these experiments is to demonstrate the advantages of the proposed step-size selection rule compared with classical and previously known adaptive Frank--Wolfe variants.
	
	\subsection{Objective Function}
	We consider the optimization of the logistic regression objective
	\[
	f(x) = \frac{1}{n} \sum_{i=1}^n \log\big(1 + \exp(-y_i (A x)_i)\big),
	\]
	where \(x \in \mathbb{R}^d\). 
	
	Each component function $f_i(x) = \log\bigl(1 + \exp(-y_i a_i^\top x)\bigr)$
	is classically smooth with Lipschitz gradient constant \(L_i = \|a_i\|^2\). Moreover, it satisfies the \((L_0,L_1)\)-smoothness condition with parameters \(L_0 = 0\) and \(L_1 = \|a_i\|\); see~\cite{gorbunov2024methods}, typically with \(\|a\| \gg 1\).
	
	We note that the class of \((L_0,L_1)\)-smooth functions is generally not closed under finite summation. Nevertheless, for the implementation of methods with fixed step sizes, we use the conservative global estimates
	$L = \max_{1 \leq i \leq n} \|a_i\|^2$, $L_0 = 0$, $L_1 = \max_{1 \leq i \leq n} \|a_i\|$. In contrast, adaptive step-size strategies do not require prior knowledge of these parameters. 
	
	\subsection{Data Generation}
	The matrix \(A\) and vector \(y\) were generated using standard normal distributions:
	\[
	A \sim \mathcal{N}(0, 1).
	\]
	Each column of \(A\) was multiplied by a scalar \(2^j\), where \(j\) is the column index.  
	The label vector \(y\) was then generated as
	\[
	y = \mathrm{sign}(A^\top x_{\mathrm{sol}} + \mathrm{noise}),
	\]
	where \(x_{\mathrm{sol}}\) is a randomly generated solution vector, and \(\mathrm{sign}\) returns the sign of each element.
	
	In all cases, the data are uncorrelated and slightly noisy.  
	We considered three geometric settings for the data distribution:
	
	\begin{enumerate}
		\item Random points uniformly distributed on the $\ell_2$-ball of radius 25, with increasing number of features and dimension.  
		\item Random points uniformly distributed on the 1-simplex, with increasing dimension.  
		\item Random points uniformly distributed on the $\ell_\infty$-ball, with increasing dimension.
	\end{enumerate}
	
	\subsection{Compared Algorithms}
	The following algorithms were compared:
	
	\begin{itemize}
		\item \textbf{Classic FW}: the standard Frank--Wolfe algorithm using the shortest-step rule
		\[
		\alpha_k = \min\left\{ \frac{\nabla f(x_k)^\top (s_k - x_k)}{L \| s_k - x_k \|^2},\, 1 \right\},
		\]
		where \(L\) is the Lipschitz constant of the gradient and \(s_k\) is the solution of the linear minimization oracle (LMO).
		
		\item \textbf{Adaptive Classic FW}: identical to the classical Frank--Wolfe algorithm, but with the smoothness constant \(L\) updated adaptively at each iteration, as analyzed in~\cite{aivazian2023adaptive}.
		
		\item \textbf{$(L_0,L_1)$-FW}: proposed algorithm (Algorithm~\ref{alg:fw_l0l1}), with fixed parameters \((L_0, L_1)\) set in advance, e.g., known for the logistic regression objective.
		
		\item \textbf{Adapt $(L_0,L_1)$-FW}: proposed algorithm (Algorithm~\ref{alg:fw_l0l1_adapt}), which employs an adaptive step-size rule based on the constants \((L_0, L_1)\), with sequential updates for each parameter.
	\end{itemize}
	
	\subsection{Results and Discussion}
	
	The experimental results demonstrate that the additional adaptive correction in the proposed Adapt $(L_0,L_1)$-FW algorithm consistently accelerates convergence across all test cases.  
	This algorithm achieved the best performance in all experiments. Similarly, the $(L_0,L_1)$-FW algorithm also significantly outperforms the standard Frank--Wolfe algorithm.
	
	The improvement provided by the adaptive procedure stems from the finer tuning of the step size: the adaptive update in Adapt $(L_0,L_1)$-FW increases the parameters more smoothly and conservatively, avoiding abrupt changes that could hinder convergence.  
	Additionally, alternating the parameter updates enhances the convergence speed, as the aggregate value
	\[
	a_k = L_0 + L_1 \| \nabla f(x_k) \|
	\]
	increases more gradually and stabilizes earlier than in more aggressive adaptive schemes.
	
	\begin{figure}[h!]
		\centering
		\includegraphics[width=\textwidth]{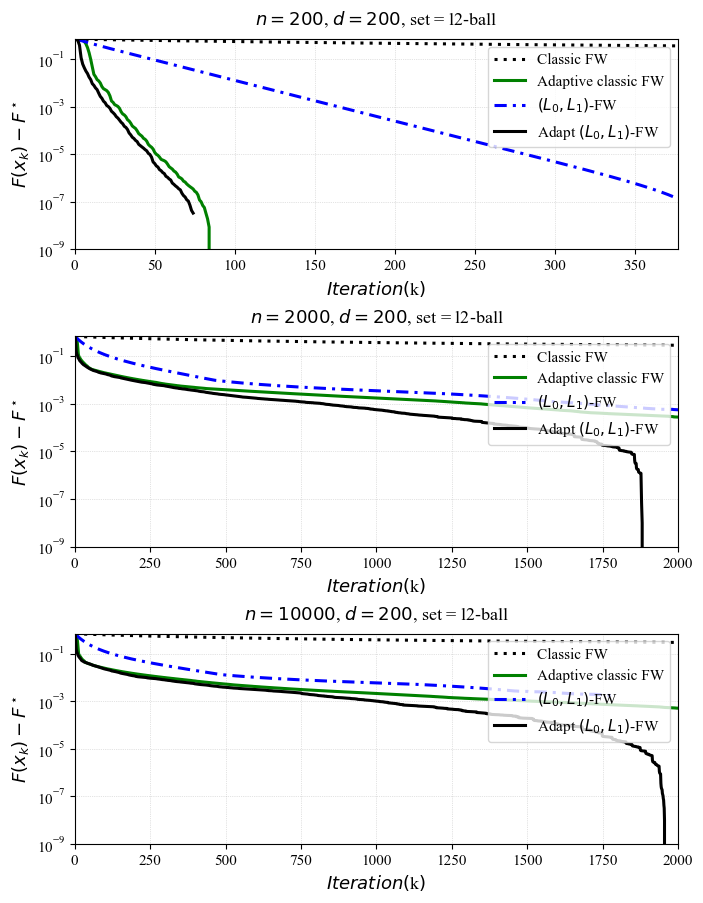}
		\caption{$\ell_2$-ball set: increasing number of data points (log scale on the Y-axis).}
	\end{figure}
	
	\begin{figure}[h!]
		\centering
		\includegraphics[width=\textwidth]{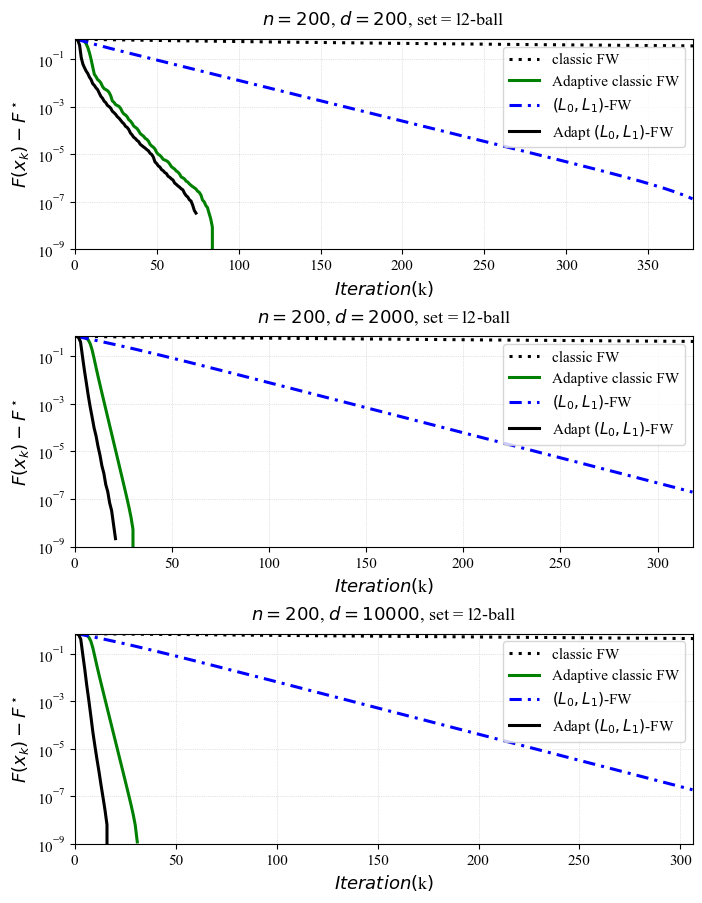}
		\caption{$\ell_2$-ball set: increasing dimension (log scale on the Y-axis).}
	\end{figure}
	
	\begin{figure}[h!]
		\centering
		\includegraphics[width=\textwidth]{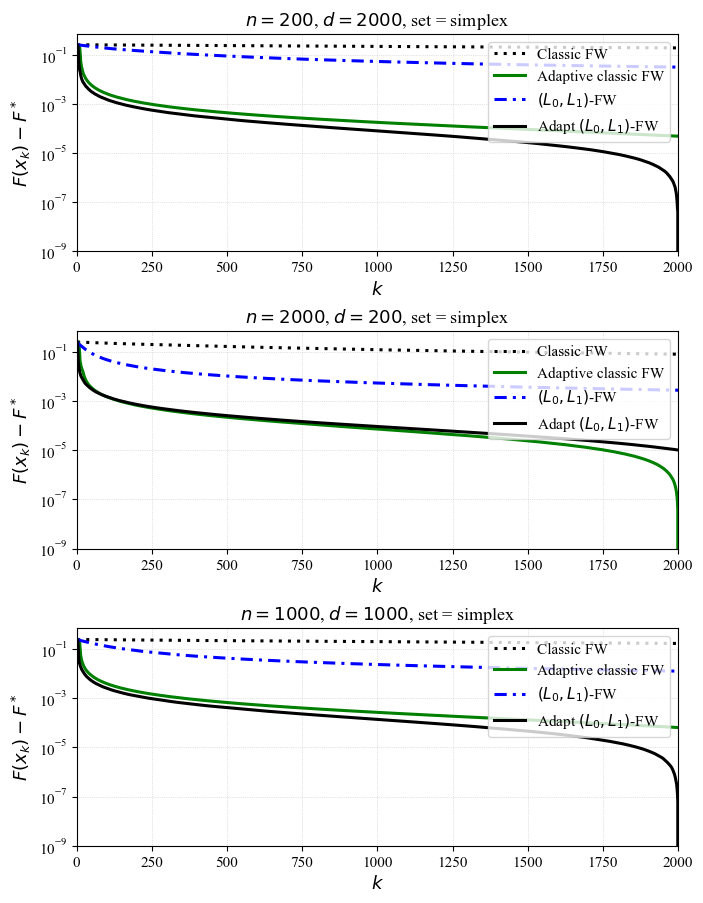}
		\caption{1-simplex set: increasing dimension (log scale on the Y-axis).}
	\end{figure}
	
	\begin{figure}[h!]
		\centering
		\includegraphics[width=\textwidth]{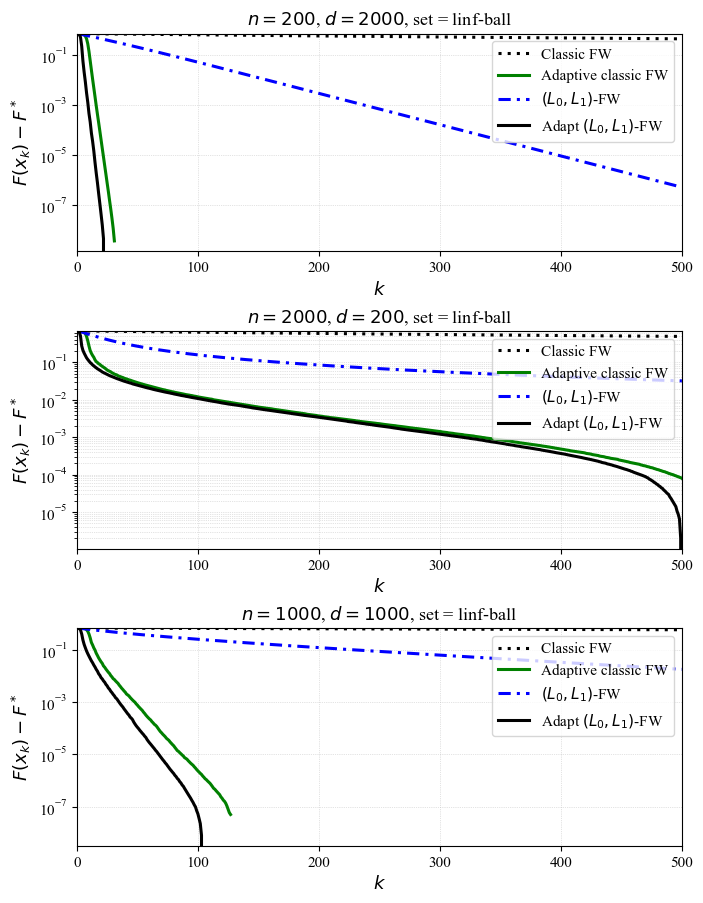}
		\caption{$\ell_\infty$-ball set: increasing dimension (log scale on the Y-axis).}
	\end{figure}
	
	\section{Conclusion}
	
	In this work, we introduced two new versions of the Frank--Wolfe method for $(L_0,L_1)$-smooth objective functions: the \emph{$(L_0,L_1)$-FW} algorithm and its adaptive counterpart, the \emph{Adaptive $(L_0,L_1)$-FW} algorithm.  
	For both methods, we established convergence guarantees under multiple settings, including general convex objectives, convex objectives over strongly convex feasible sets, and convex objectives satisfying the Polyak--Łojasiewicz (PL) condition with the solution in the interior of the feasible set.
	
	The numerical experiments confirm the theoretical results and show that the proposed algorithms outperform the classical Frank--Wolfe method and existing adaptive variants.  
	In particular, the adaptive version exhibits a clear advantage due to its capability to dynamically adjust the parameters $L_0$ and $L_1$ based on the local problem geometry.
	
	Our adaptation strategy differs from previous approaches that adjust only a single Lipschitz-like constant.  
	Instead, we adapt both parameters simultaneously, accounting for their relative contributions in the composite smoothness model, which renders the procedure more flexible and responsive.
	
	Despite these encouraging results, several open questions remain. For instance, how large can the adaptive parameters become relative to the true $(L_0,L_1)$ values? And what are the optimal initial choices of $L_0$ and $L_1$ when these parameters are not known a priori?
	
	\noindent{\bf Acknowledgments}
	
	\noindent  ARC 09-2025 001
	This work was supported by the Ministry of Science and Higher Education of the Russian Federation within state assignment no. 075-03-2024-074 under the project “Study of Asymptotic Characteristics of Fluctuations of Differential Equations and Systems, and Optimization Methods.”
	
	\bibliographystyle{splncs04}
	\bibliography{FW_L0L1}
	
\end{document}